\title{On the Ground State Energies of Discrete and Semiclassical Schr\"odinger Operators}
\author{Isabel Detherage}
\author{Nikhil Srivastava}
\author{Zachary Stier}
\affil{Department of Mathematics, UC Berkeley}
\date{\today}
\begin{document}

\maketitle

\begin{abstract}
    We study the infimum of the spectrum, or ground state energy (g.s.e.), of a discrete Schr\"odinger operator on $\theta\Z^d$ parameterized by a  potential $V:\R^d\rightarrow\R_{\ge 0}$ and a frequency parameter $\theta\in (0,1)$. We relate this g.s.e. to that of a corresponding continuous semiclassical Schr\"odinger operator on $\R^d$ with parameter $\theta$, arising from the same choice of potential. We show that:
    \begin{enumerate}[label=(\roman*)]
        \item The discrete g.s.e.\ is at most the continuous one for continuous periodic $V$ and irrational $\theta$. 
        \item The opposite inequality holds up to a factor of $1-o(1)$ as $\theta\rightarrow 0$ for sufficiently regular smooth periodic $V$. \item The opposite inequality holds up to a constant factor for every bounded $V$ and $\theta$ with the property that discrete and continuous averages of $V$ on fundamental domains of $\theta \Z^d$ are comparable. 
    \end{enumerate}
    Our proofs are elementary and rely on sampling and interpolation to map low-energy functions for the discrete operator on $\theta \Z^d$ to low-energy functions for the continuous operator on $\R^d$, and vice versa.
\end{abstract}

\section{Introduction}
Let $V:\R^d\rightarrow\R_{\ge 0}$ be a smooth $\Z^d$-periodic potential and let $\theta\in (0,1)$ be a parameter. We study the family of discrete Schr\"odinger operators $A_\theta: \ell_2(\gt\Z^d) \to \ell_2(\gt\Z^d)$ defined by
\begin{equation}
    \label{eqn:atheta}
    A_\theta f(k\gt) := \left(2df(k\gt) - \sum\limits_{j \sim k} f(j\gt)\right) + V(k\theta)f(k\gt)=(L  + V) f(k\theta),\qquad \text{for $k\in \Z^d$}
\end{equation}
where $j\sim k$ indicates that $j,k\in \Z^d$ differ in exactly one coordinate and $L$ is the discrete Laplacian of the corresponding graph on $\theta \Z^d$. This family includes several well-studied operators, such as the almost-Mathieu operator which corresponds to the special case 
\begin{align}
    \label{eqn:almostmathieu}  V(x):=\lambda(2-2\cos(2\pi  x)), \qquad \text{for $\lambda>0$}
\end{align}
in $d=1$. The  operator $A_\theta$ is bounded and self-adjoint with $\spec(A_\theta)\subseteq [0,4d+\norm{V}_\infty]$. We refer to the infimum of its spectrum as its {\em ground state energy}, denoted $\mu(A_\theta)$, which admits the well-known variational characterization
\begin{equation}
    \label{eqn:avar}
    \mu(A_\theta) = \inf_{\substack{f\in \ell_2(\theta \Z^d) \\ \norm{f}=1}} \langle f,A_\theta f\rangle = \inf_{\substack{f\in \ell_2(\theta \Z^d) \\ \norm{f}=1}} \lpr{\sum_{j\sim k\in\Z^d} (f(k\theta)-f(j\theta))^2 + \sum_{k\in \Z^d} V(k\theta)f(k\theta)^2}.
\end{equation}

Motivated by questions in geometric group theory, there has been  interest \cite{beguin1997spectrum,boca2005norm,bump2017exercise,bump2017useful,ozawa2022substitute} in proving good upper and lower bounds on $\mu(A_\theta), \theta\in\lbr{0,\half}$ for the special case \eqref{eqn:almostmathieu} --- roughly speaking, this quantity arises naturally in the representation theory of the Heisenberg group and controls the spectral gap of the random walk on that group. More recently, a line of work initiated by Ozawa \cite{ozawa13, kaluba2019aut, kaluba2021property} introduced a method for producing semidefinite programming proofs of Kazhdan's Property (T) for various groups including $SL_n(\Z)$, in which effective bounds on certain $\mu(A_\theta)$ play a role. The best known bounds on $\mu(A_\theta)$ for the almost-Mathieu operator due to \cite{boca2005norm} rely on delicate trigonometric calculations, and as far as we are aware there is no systematic method for proving such bounds in general. 

In contrast, in many cases the ground state energy of a corresponding periodic Schr\"odinger operator on $\R^d$ can be estimated using well-established tools of spectral theory --- see e.g.\ \cite{kato1952note,gesztesy1992ground} for examples of such bounds in $1$ dimension, and \propref{prop:muBlb intro} for coarse bounds in any dimension. In this context, consider the {$d$-dimensional Schr\"odinger operator with potential $V$ and semiclassical parameter $\gt$}, $B_\theta:H^2(\R^d)\to L^2(\R^d)$, defined as
\begin{equation}
    \label{eqn:btheta}
B_\theta f(x) := -\theta^2 \sum\limits_{i=1}^d \frac{d^2}{dx_i^2} f(x) + V(x)f(x)=(-\gt^2\Delta+V)f(x).
\end{equation}
$B_\gt $ is an unbounded selfadjoint nonnegative operator. Let $\mu(B_\gt)$ denote the ground state energy of $B_\theta$. We then have the variational characterization (see e.g.\ \cite[Chapter VIII]{reed1981functional})
\begin{equation}
    \label{eqn:bvar}
    \mu(B_\gt) =  \inf_{\substack{g\in H^2(\R^d) \\ \norm{g}=1}} \langle g,B_\theta g\rangle = \inf_{\substack{g\in H^2(\R^d) \\ \norm{g}=1}} \int_{\R^d} \gt^2\norm{\nabla g(x)}^2 + V(x)g(x)^2\d x.
\end{equation}

The purpose of this paper is to establish a relationship between $\mu(A_\theta)$ and $\mu(B_\theta)$ for a wide class of $V$, allowing bounds in one setting to be transferred to the other. To our knowledge, this is the first work to relate the two quantities. Our results are general, in that they apply for a wide class of potentials, and effective, in that they provide concrete bounds for fixed $\theta$ as is required in the applications to group theory mentioned above. 

We prove the following three results which articulate that while the spectra of $A_\theta$ and $B_\theta$ are clearly different, their ground state energies are in many cases comparable. With the exception of \thmref{thm:sampling}, we do not expect the bounds obtained here to be sharp in terms of their dependence on $\theta$. 

\paragraph{Upper bound on $\mu(A_\theta)$ for continuous periodic $V$, irrational $\theta$.}

\begin{theorem}\label{thm:sampling}
    Let $V:\R^d \to \R _{\ge 0}$ be continuous and $\mathbb{Z}^d$-periodic. For $\gt$ irrational,
    $$\mu(A_\theta) \le \mu(B_\theta).$$
\end{theorem}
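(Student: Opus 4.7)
The plan is to build an approximate ground state for $A_\theta$ by sampling a localized periodic ground state of $B_\theta$ on the lattice $\theta\Z^d$, then use Weyl equidistribution (equivalently, unique ergodicity of $x\mapsto x+\theta$ on $\R^d/\Z^d$, valid because $\theta$ is irrational) to match the discrete and continuous Rayleigh quotients.

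First, I would recall that by standard Floquet--Bloch theory, $\mu(B_\theta)=\lambda_0$, the smallest eigenvalue of $-\theta^2\Delta+V$ on $\R^d/\Z^d$: the Kato-type inequality $|\nabla|\psi||\le|(\nabla+i\xi)\psi|$ applied to Bloch waves shows that the lowest band attains its minimum at quasi-momentum $\xi=0$. Let $\phi\in C^\infty(\R^d)$ be the corresponding positive $\Z^d$-periodic eigenfunction with $\int_{[0,1]^d}\phi^2\,dx=1$; by the eigenvalue equation, $\int_{[0,1]^d}(\theta^2|\nabla\phi|^2+V\phi^2)\,dx=\lambda_0$. Next, fix $\chi\in C_c^\infty(\R^d;[0,1])$, set $\chi_R(x):=\chi(x/R)$ and $g_R:=\phi\chi_R$, and define the trial vector $f_R(k\theta):=g_R(k\theta)\in\ell_2(\theta\Z^d)$.

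The heart of the argument is that the three sums in $\langle f_R,A_\theta f_R\rangle$ and $\|f_R\|^2$ all take the form $\sum_{k\in\Z^d}h(k\theta)\chi(k\theta/R)^2$ for a continuous $\Z^d$-periodic $h$, and for $\theta$ irrational and any such $h$,
\[
\lim_{R\to\infty}\theta^d R^{-d}\sum_{k\in\Z^d}h(k\theta)\chi(k\theta/R)^2 = \left(\int_{[0,1]^d}h(x)\,dx\right)\left(\int_{\R^d}\chi(y)^2\,dy\right).
\]
I would prove this by partitioning $\Z^d$ into cubes $Q_j$ of side $L=L(R)$ with $1\ll L\ll R$, applying uniform Weyl equidistribution on each $Q_j$ (where $\chi(k\theta/R)$ is essentially constant), and interpreting the remaining sum over cube centers as a Riemann sum for $\theta^{-d}\int\chi^2$. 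Applying this with $h=\phi^2$ evaluates $\|f_R\|^2$; with $h=V\phi^2$ evaluates the potential sum; and with $h(x)=(\phi(x+\theta e_i)-\phi(x))^2$, combined with the elementary bound $\int_{[0,1]^d}(\phi(x+\theta e_i)-\phi(x))^2\,dx\le\theta^2\int_{[0,1]^d}(\partial_i\phi)^2\,dx$ (Cauchy--Schwarz on $\phi(x+\theta e_i)-\phi(x)=\int_0^\theta\partial_i\phi(x+te_i)\,dt$), evaluates the kinetic term; boundary errors from $\chi_R((k+e_i)\theta)-\chi_R(k\theta)=O(\theta/R)$ contribute only $O(R^{d-2}\theta^{2-d})=o(R^d\theta^{-d})$. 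Taking the ratio and sending $R\to\infty$ yields $\langle f_R,A_\theta f_R\rangle/\|f_R\|^2\le\theta^2\int_{[0,1]^d}|\nabla\phi|^2+\int_{[0,1]^d}V\phi^2+o(1)=\lambda_0+o(1)=\mu(B_\theta)+o(1)$, proving $\mu(A_\theta)\le\mu(B_\theta)$.

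The main obstacle is the uniform Weyl estimate: the per-cube error $o(L^d)$ must be controlled uniformly as one sums over the $\asymp(R/L)^d$ cubes covering the support of $\chi_R$, and boundary cubes straddling that support require separate treatment. Irrationality of $\theta$ enters essentially here via unique ergodicity of $x\mapsto x+\theta$ on $(\R/\Z)^d$, which furnishes the needed uniform convergence in Weyl's theorem; the choice of $L(R)$ balances the Weyl error against the Riemann sum approximation for $\chi$.
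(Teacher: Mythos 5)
Your argument is correct in its essentials but takes a genuinely different route from the paper's. The paper does not pass through Floquet--Bloch theory: it starts from an \emph{arbitrary} test function $g\in H^1(\R^d)$ for $B_\theta$, defines randomly shifted samples $f_\eta(k\theta):=g(k\theta+\eta)$ for $\eta$ uniform on $[0,\theta]^d$, computes $\E_\eta\|f_\eta\|^2=\theta^{-d}\|g\|^2$ and $\E_\eta\langle f_\eta,Vf_\eta\rangle=\theta^{-d}\langle g,Vg\rangle$ exactly and $\E_\eta\langle f_\eta,Lf_\eta\rangle\le\theta^{2-d}\|\nabla g\|^2$ via the same Cauchy--Schwarz step you use for the kinetic term, and then invokes the probabilistic pigeonhole to produce a shift $\eta$ with $\cR_{A_{\theta,\eta}}(f_\eta)\le\cR_{B_\theta}(g)$; a short separate lemma shows $\mu(A_{\theta,\eta})$ does not depend on $\eta$ when $\theta$ is irrational, which is the only place irrationality enters. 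Your route instead identifies $\mu(B_\theta)$ with the lowest periodic Bloch eigenvalue $\lambda_0$ via the diamagnetic inequality, truncates the periodic ground state $\phi$ by a cutoff $\chi_R$, and uses uniform Weyl equidistribution of $\{k\theta\bmod\Z^d:k\in\Z^d\}$ to pass $R\to\infty$. Trade-offs: the paper's approach needs no spectral theory of periodic operators, avoids the technicalities of the equidistribution/Riemann-sum bookkeeping, and cleanly isolates the role of irrationality in a single lemma; your approach is more constructive (explicit near-minimizers $f_R$) and would yield quantitative error rates given effective Weyl estimates. Two small fixes to your writeup: for $V$ merely continuous, elliptic regularity gives $\phi\in C^{1,\alpha}$, not $C^\infty$ (this is enough for your argument); and the dynamical input you actually need is equidistribution of the $\Z^d$-\emph{action} $k\mapsto k\theta$ on $(\R/\Z)^d$, obtained coordinate-by-coordinate from Weyl's criterion with uniformity in the base point --- the single map $x\mapsto x+(\theta,\dots,\theta)$ is \emph{not} uniquely ergodic on $(\R/\Z)^d$ for $d\ge 2$, since its orbit closure is the diagonal subtorus.
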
 
The irrationality assumption above cannot be removed in general --- see \rmkref{rem:irrat} for a discussion. However the above result can be extended to all $\gt$ by instead considering the infimum of the {\em union spectrum} of $A_\theta$ (see \defref{def:union}) instead of $\mu(A_\gt)$.  The proof of \thmref{thm:sampling} appears in \secref{sec:upper}.

\paragraph{Lower bound on $\mu(A_\theta)$ for smooth periodic $V$, small $\theta$.} Under stronger assumptions on $V$, we establish the reverse inequality $\mu(A_\theta)\ge \mu(B_\theta)(1-o(1))$ as $\theta\rightarrow 0$. To state our result we introduce the following definitions.

\begin{definition}[Critical Point] \label{def:critical}
    For a multi-index $\ga=(\ga_1,\dots,\ga_d)\in\N^d$, let $D^\ga:=\lpr{\frac{\dd}{\dd x_1}}^{\ga_1}\cdots\lpr{\frac{\dd}{\dd x_d}}^{\ga_d}$. A point $m\in\R^d$ is a {\em critical point of order $p$} of $V$ if $D^\alpha V(m)=0$ for all $\ga$ satisfying $\norm{\alpha}_1\le p-1$, and  $D^\alpha V(m)\neq 0$ for some $\alpha$ satisfying $\norm{\alpha}_1=p$. 
\end{definition}
\begin{definition}[Coercive] \label{def:coercive}
A potential $V$ is {\em $P$-coercive} with parameters $(t_0, K, P)$, $K\ge 1$, if for every $t\le t_0$ and $y\in\R^d$, $V(y) \le t+\inf V$ implies that there exists a critical point $m$ of order $p$ with $2\le p\le P$ such that $\norm{y-m} \le K t^\frac{1}{p}$. We say that $V$ is {\em uniformly $p$-coercive} if it is $p$-coercive and all of its critical points have order exactly $p$. \end{definition} 

Coercivity guarantees that if a function is close to its minimum value, then its input must be polynomially close to a critical point. Note that any real analytic periodic potential with finitely many local minima satisfies this property. The bound below depends on the parameters of coercivity as well as $V$'s smoothness.

\begin{theorem}\label{thm:lowerbound}
    Let $V:\R^d \rightarrow \R _{\ge 0}$ be smooth, $\mathbb{Z}^d$-periodic, and $P$-coercive with bounded directional derivative in every sequence of $P$ directions --- i.e., for each $x\in\R^d$ and $\ga\in\N^d$ with $\norm{\ga}_1\le P$, there is an upper bound on $D^\ga V(x)$ independent of $\ga$ and $x$. Then for $\gt$ sufficiently small, 
    $$\lpr{1+O\lpr{\gt^{\frac{1}{2(P+1)}}}}\mu(A_\gt)\ge\mu(B_\gt)-O\lpr{\gt^{2-\frac{3P+1}{2P(P+1)}}}.$$
\end{theorem}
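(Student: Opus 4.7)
} The strategy inverts that of \thmref{thm:sampling}: given a near-minimizer $f\in\ell_2(\gt\Z^d)$ with $\norm{f}=1$ and $\langle f,A_\gt f\rangle\to\mu(A_\gt)$, build a trial function $g\in H^1(\R^d)$ whose Rayleigh quotient upper-bounds $\mu(B_\gt)$. A natural choice is to take $g$ to be a piecewise-polynomial interpolation of $f$ on the cubes $\gt k+[0,\gt]^d$, $k\in\Z^d$---multilinear interpolation is the simplest option, and yields closed-form expressions for $\int g^2$ and $\int\norm{\nabla g}^2$ in terms of $\norm{f}^2$ and of the discrete Dirichlet energy $\norm{Df}^2:=\sum_{j\sim k}(f(k\gt)-f(j\gt))^2$. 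After normalization, $\gt^2\int\norm{\nabla g}^2$ is comparable to $\gt^d\norm{Df}^2$ and $\int g^2$ to $\gt^d\norm{f}^2(1-O(\norm{Df}^2))$; combined with $\int Vg^2=\gt^d\sum_k V(k\gt)f(k\gt)^2+E$, the Rayleigh quotient of $g$ becomes $\langle f,A_\gt f\rangle\cdot(1+\epsilon)+E/\gt^d$, where $\epsilon$ captures an interpolation-induced multiplicative distortion. Controlling $\norm{Df}^2$ via $\langle f,A_\gt f\rangle=\mu(A_\gt)+o(1)$ and easy upper bounds on $\mu(A_\gt)$, the proof reduces to bounding $\epsilon$ and $E/\gt^d$ by the two exponents in the statement.

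To estimate $E$, I would introduce a scale $r=\gt^\beta$ with $\beta\in(0,1)$ to be optimized, and split $\Z^d$ into \emph{near} cells $k$ with $\mathrm{dist}(k\gt,\mathrm{Crit}(V))\le r$ and \emph{far} cells where this distance exceeds $r$. On far cells, $P$-coercivity (\defref{def:coercive}) forces $V(k\gt)\gtrsim r^P$, so the total $\ell_2$-mass of $f$ supported on far cells is at most $\mu(A_\gt)/r^P$; since the per-cell variation of $V$ is $O(\gt\norm{\nabla V}_\infty)$, the total far-cell contribution to $E/\gt^d$ is of order $\gt\mu(A_\gt)r^{-P}$. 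On a near cell whose center lies within distance $r$ of a critical point $m$ of order $p\le P$, Taylor's theorem at $k\gt$ combined with the uniform bound on $D^\alpha V$ for $\norm{\alpha}_1\le P$ and with the fact that $D^\alpha V(k\gt)=O((r+\gt)^{p-|\alpha|})$ for $|\alpha|<p$ gives the much sharper estimate $|V(x)-V(k\gt)|\lesssim \gt r^{P-1}+\gt^P$ uniformly for $x\in\gt k+[0,\gt]^d$, leading to a total near-cell contribution of order $\gt r^{P-1}$. Optimizing $\beta$ to balance $\gt\mu(A_\gt)r^{-P}$ against $\gt r^{P-1}$, while simultaneously tracking the distortion $\epsilon$ and any remaining sub-leading corrections, yields the advertised exponents.

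The technical heart of the argument is the near-cell bound: extracting a power of $r$ smaller than $O(1)$ from the variation of $V$ on a cube of diameter $\gt$ requires both $P$-coercivity and the uniform derivative bound, and in particular uses that $D^\alpha V$ itself is small near $m$ for $|\alpha|<p$. A secondary, bookkeeping-heavy task is pinning down the multiplicative distortion $\epsilon$: since naive multilinear interpolation distorts $\norm{Df}^2$ by an $O(1)$ factor in $d\ge 2$, one either uses a smarter (spline- or band-limited) interpolation designed to match the kinetic term on low-frequency functions, or absorbs the distortion into the $\gt^{1/(2(P+1))}$ factor by exploiting that a near-minimizer $f$ of $A_\gt$ must be concentrated on the semiclassical length scale $\gt^{2/(P+2)}$ around critical points and is therefore smooth in the lattice sense.
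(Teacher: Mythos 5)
Your overall plan is the paper's: interpolate a discrete near-minimizer $f$ multilinearly to get $g$, compare the kinetic terms, and handle the potential mismatch cube-by-cube via Taylor's theorem and coercivity near critical points. Your near/far decomposition (by distance to $\mathrm{Crit}(V)$) is a reparametrization of the paper's split by whether $V(y)\ge\gt^q$ or $V(y)<\gt^q$, and the resulting estimates -- a relative $O(\gt^{1-q})$ error where $V$ is bounded below and an additive $O(\gt^{1+q(P-1)/P})$ error near critical points -- match what the paper obtains.

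The genuine gap is in the kinetic-term analysis. You assert that multilinear interpolation distorts the discrete Dirichlet energy by an unfavorable $O(1)$ factor in $d\ge 2$ and propose to repair this either by a fancier interpolant or by a concentration argument for the ground state; neither is needed, and the second, as sketched, would not cleanly eliminate an $O(1)$ loss anyway. The paper instead computes, via Boolean Fourier analysis on each hypercube, $\langle g,-\Delta g\rangle=4\gt^{d-2}\sum_k(1/3)^{k-1}k\,m_k$ and $\langle f,Lf\rangle=4\sum_k k\,m_k$, which gives the one-sided inequality $\langle g,-\Delta g\rangle\le\gt^{d-2}\langle f,Lf\rangle$ \emph{exactly}, with no loss in the numerator. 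The only loss is in the $L^2$ norm, $\|g\|^2=\gt^d\sum_k(1/3)^k m_k\ge\gt^d m_0$, and $m_0\ge 1-\mu(A_\gt)-\eps$ follows immediately from $f$ being a near-ground-state (since $\sum_{k\ge1}m_k\le\tfrac14\langle f,Lf\rangle$). This gives a multiplicative factor $1+O(\mu(A_\gt))=1+o(1)$ directly, and is the key technical step your proposal is missing. (A secondary quibble: on near cells you bound the Taylor error by $\gt r^{P-1}$, but the nearby critical point may have order $p<P$, for which $\gt r^{p-1}>\gt r^{P-1}$; the worst case is $p=2$. The paper's write-up has the same subtlety in passing from $\gt^{1+q(p-1)/p}$ to $\gt^{1+q(P-1)/P}$, which is valid only under uniform $P$-coercivity, so I don't weight this against you.)
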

The proof of \thmref{thm:lowerbound} appears in \secref{sec:lowerasymp}, where a more precise version (\thmref{thm:viataylor}) specifying the constants in the asymptotic notation is provided.
The estimate above is nontrivial at least for $V$ with isolated critical points: for such potentials, it can be shown that the additive  $O(\gt^{2-\frac{3P+1}{2P(P+1)}})$ term on the right hand side is of smaller order than $\mu(B_\theta)$ as $\theta\rightarrow 0$. In particular we prove the following estimates in \secref{sec:sobolev proof} using simple arguments which we suspect are known but could not find in the literature.
\begin{proposition}\label{prop:muBlb intro}
    Suppose $V:\R^d\rightarrow\R_{\ge0}$ is a smooth $\Z^d$-periodic potential which is uniformly $p$-coercive for some $p\ge 2$ and has only isolated critical points. Then as $\theta\rightarrow 0$ we have
    \begin{enumerate}
        \item[(a)] $\mu(B_\theta)=\Omega\lpr{\theta^{\frac{2p}{p+1}}}$ if $d=1,2$,
        \item[(b)] $\mu(B_\theta)=\Omega\lpr{\theta^{\frac{2p}{p+2}}}$ if $d\ge 3$, and
        \item[(c)] $\mu(B_\gt)=O\lpr{\gt^{\frac{2p}{p+2}}}$ for all $d\ge1$,
    \end{enumerate}
    where the implicit constants depend on $V$.
\end{proposition}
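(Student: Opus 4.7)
The plan is to prove the three bounds via elementary test-function and coercivity arguments, with the dimension-dependent exponent in the lower bound controlled by which Sobolev embedding is available. Throughout I may assume $\inf V=0$, since otherwise $\mu(B_\theta)\ge\inf V=\Omega(1)$ dominates the asymptotic in every part.

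For the upper bound (c), fix any global minimum $m$ of $V$; by uniform $p$-coercivity, $m$ is a critical point of order exactly $p$, and Taylor's theorem then gives $V(x)\le C_V|x-m|^p$ on a fixed neighborhood. Testing \eqref{eqn:bvar} against the rescaled bump $g(x):=r^{-d/2}\phi((x-m)/r)$, with $\phi\in C_c^\infty(\R^d)$ fixed of unit $L^2$ norm, produces a Rayleigh quotient of the form $\theta^2 r^{-2}\|\nabla\phi\|_2^2 + C_V r^p\int|y|^p\phi(y)^2\d y$, minimized at $r=\Theta(\theta^{2/(p+2)})$ with value $O(\theta^{2p/(p+2)})$.

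For the lower bounds (a), (b), let $g\in H^1(\R^d)$ with $\|g\|_2=1$ and let $M\subset\R^d$ denote the $\Z^d$-periodic discrete set of global minima of $V$. Coercivity together with compactness supplies constants $r_0,c_0,c_1>0$ such that $V(x)\ge c_1\,d(x,M)^p$ on $\{d(x,M)\le r_0\}$ and $V(x)\ge c_0$ elsewhere, so for $a<r_0$ with $c_1 a^p\le c_0$, writing $N_a:=\{x:d(x,M)\le a\}$ gives
$$\int V g^2\,\ge\,c_1 a^p\lpr{1-\int_{N_a}g^2}.$$
The crucial step is a Sobolev-type bound
$$\int_{N_a}g^2\,\le\,C a^k\bigl(1+\|\nabla g\|_2^2\bigr),\qquad k=1\text{ if }d\in\{1,2\},\quad k=2\text{ if }d\ge 3,$$
obtained by combining H\"older's inequality inside each $B(m,a)$ (producing the factor $|B(m,a)|^{1-2/q}\asymp a^{d(1-2/q)}$) with a local Sobolev embedding $H^1(B(m,1))\hookrightarrow L^q(B(m,1))$ --- taking $q=\infty$ for $d=1$, $q=4$ for $d=2$, and $q=2d/(d-2)$ for $d\ge 3$ --- and then summing over $m\in M$ using bounded multiplicity of overlap of unit balls around points in the discrete $\Z^d$-periodic set $M$. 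Substituting,
$$R(g)\,\ge\,\bigl(\theta^2-c_1 Ca^{p+k}\bigr)\|\nabla g\|_2^2+c_1 a^p(1-Ca^k),$$
and taking $a=\Theta(\theta^{2/(p+k)})$ to annihilate the $\|\nabla g\|_2^2$ coefficient delivers $R(g)=\Omega(\theta^{2p/(p+k)})$, proving (a) for $k=1$ and (b) for $k=2$.

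The main obstacle is the Sobolev step, where the achievable power of $a$ degrades from $a^2$ in $d\ge 3$ to $a$ in $d\le 2$: in $d=1$ the measure $|B(m,a)|=2a$ itself caps the H\"older factor at $a$ even with $L^\infty$ control, and in $d=2$ the failure of $H^1\hookrightarrow L^\infty$ forces $q<\infty$, for which $a^{d(1-2/q)}\le a$ using $q=4$. This is exactly what drives the gap between the exponents $2p/(p+1)$ and $2p/(p+2)$ in the statement, and everything else reduces to the routine optimization of the kinetic against the potential contribution at the scale $a=\Theta(\theta^{2/(p+k)})$.
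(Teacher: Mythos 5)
Your proof is correct, and it follows a related but noticeably reorganized route. For the lower bounds, the paper argues by contradiction from $\cR_{B_\theta}(f)\le\theta^r$, decomposes $\R^d$ into unit cubes, runs two Markov inequalities to locate ``good'' cubes in which a constant fraction of $\|f\|^2$ lives inside the sublevel set $W=\{V\le2\theta^r\}$, then applies H\"older and Sobolev on those cubes; you instead work directly with balls $B(m,a)$ around the periodic critical set $M$, write $\int Vg^2\ge c_1a^p\bigl(1-\int_{N_a}g^2\bigr)$ using the pointwise bound $V\gtrsim d(\cdot,M)^p$ (which is indeed an immediate consequence of uniform $p$-coercivity, with $c_1=K^{-p}$, $r_0=Kt_0^{1/p}$, $c_0=t_0$), control $\int_{N_a}g^2$ by H\"older plus a \emph{local} Sobolev embedding on each $B(m,1)$ with bounded overlap, and then choose $a\asymp\theta^{2/(p+k)}$ to make the coefficient of $\|\nabla g\|_2^2$ nonnegative. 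The analytic ingredients --- coercivity controlling the volume of the sublevel set, the H\"older/Sobolev coupling, and the dimension-dependent choice of $q$ ($q=\infty$, $4$, or $\frac{2d}{d-2}$) driving the exponent --- are the same, and the final exponents match. Your version has the advantage of avoiding the two-stage Markov argument and of applying Sobolev locally on balls from the start, which also sidesteps a technicality in the paper's global step $\|f\|_q^2\ge\sum_{C\in S}\|f|_{W\cap C}\|_q^2$ (this inequality, as literally stated, is false for $q>2$ by subadditivity of $t\mapsto t^{2/q}$, though it is fixable by exactly the kind of local Sobolev argument you use). One difference to be aware of: for $d=2$ the paper lets $q\to\infty$ and actually establishes the stronger exponent $\frac{2p}{p+2}+o(1)$, whereas your fixed choice $q=4$ gives exactly the stated $\frac{2p}{p+1}$; both prove the proposition as written. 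For the upper bound (c), you test with a rescaled compactly supported bump where the paper computes explicitly with a Gaussian; this is slightly more elementary and gives the same exponent $\frac{2p}{p+2}$ but a less explicit constant. The remark ``WLOG $\inf V=0$'' is harmless but unnecessary: coercivity as defined in the paper already forces every critical point $m$ to satisfy $V(m)=0$ (since $D^\alpha V(m)=0$ for $\|\alpha\|_1=0$), so $\inf V=0$ automatically whenever critical points exist.
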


\paragraph{Lower bound on $\mu(A_\theta)$ for bounded $V$ and any $\theta$.} For $y \in \gt \Z^d$, let $C(y) := y + \{0, \gt\}^d$ and $\ol{C}(y) := y + [0,\gt]^d$. We then have the following comparison between $\mu(A_\theta)$ and $\mu(B_\theta)$ under the less restrictive assumptions that $V$ is bounded, nonnegative, and  its local averages on the cubes $C(y)$ and $\ol{C}(y)$ are comparable for all $y\in \theta \Z^d$.

\begin{theorem}\label{thm:simple}
    Let $V: \R ^d \rightarrow \R _{\ge 0}$, not necessarily periodic with $\norm{V}_\infty\le 1$, and let $\theta\in (0,1)$. Suppose that there exists $a =a(\theta)>0$ such that for all $y\in\gt\Z^d$, 
    \begin{equation}\label{eqn:adef}\frac{1}{\gt^d}\int_{\ol{C}(y)}V(x) \d x \le a \cdot \frac{1}{2^d} \sum\limits_{k \in C(y)}V(k).\end{equation}
    Then, 
    $$c_d\mu(A_\theta)\ge\mu(B_\theta)$$ 
    where $c_d = \max \lcr{\frac{3^d}{2^d} + 5 \cdot 3^d, 4\cdot\frac{3^d}{2^d}\cdot a}$. 
\end{theorem}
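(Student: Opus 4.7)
The plan is to take a near-minimizer $f\in\ell_2(\theta\Z^d)$ for the discrete variational problem \eqref{eqn:avar} and construct from it a continuous test function $g\in H^1(\R^d)$ whose Rayleigh quotient $\langle g,B_\theta g\rangle/\norm{g}_{L^2}^2$ is at most $c_d\mu(A_\theta)$ (up to $\varepsilon$, which one then sends to $0$). The natural candidate is the \emph{multilinear interpolation} of $f$: on each closed cube $\overline{C}(y)=y+[0,\theta]^d$ let $g$ be the unique multilinear function agreeing with $f$ at the $2^d$ corners of $C(y)$; equivalently $g(x)=\sum_{k\in\theta\Z^d}f(k)\,\tau(x-k)$ with $\tau(y)=\prod_{i=1}^d(1-|y_i|/\theta)_+$ the tensor-product hat function. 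Then $g$ is Lipschitz and hence lies in $H^1(\R^d)$, and $g|_{\theta\Z^d}=f$.

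I would then estimate each of the three pieces of the continuous Rayleigh quotient separately. For the $L^2$-norm, Cauchy--Schwarz on the convex combination $g=\sum_{v\in C(y)}\lambda_v(x)f(v)$ together with the spectrum of the local multilinear mass matrix yields the two-sided comparison $(\theta/3)^d\norm{f}_{\ell_2}^2\le\norm{g}_{L^2}^2\le\theta^d\norm{f}_{\ell_2}^2$, with the lower bound producing the factor $3^d$ that appears in $c_d$. For the gradient, on each cube $\partial_{x_i}g$ is itself a convex combination of the scaled lattice differences $(f(v+\theta e_i)-f(v))/\theta$; squaring, integrating, and summing over cubes gives $\theta^2\norm{\nabla g}_{L^2}^2\le(\theta^d/2)\sum_{j\sim k}(f(j\theta)-f(k\theta))^2$, which is at most a constant multiple of $\theta^d\mu(A_\theta)\norm{f}_{\ell_2}^2$ via \eqref{eqn:avar}.

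The main obstacle is the potential integral $\int V g^2$, since $V$ and $g^2$ are coupled inside the integral while the hypothesis \eqref{eqn:adef} compares only \emph{averages} of $V$ on cubes to sums of $V$ at corners. My plan is to decompose on each cube as $g=\bar g_y+(g-\bar g_y)$ with $\bar g_y=\theta^{-d}\int_{\overline{C}(y)}g$, yielding $g^2\le 2\bar g_y^2+2(g-\bar g_y)^2$. The constant piece factors: apply \eqref{eqn:adef} to replace $\int_{\overline{C}(y)}V$ by $(a\theta^d/2^d)\sum_{w\in C(y)}V(w)$, bound $\bar g_y^2\le 2^{-d}\sum_{v\in C(y)}f(v)^2$ by Jensen, and collapse the resulting cross-sum $\sum_{v,w\in C(y)}f(v)^2V(w)$ onto the diagonal $\sum_{w\in C(y)}f(w)^2V(w)$ using $f(v)^2\le 2f(w)^2+2(f(v)-f(w))^2$ together with $V\le 1$; the residual pair-differences $\sum_{v,w\in C(y)}(f(v)-f(w))^2$ are controlled via the spectral gap of the Boolean-cube Laplacian (to avoid the dimension-dependent losses of a naive path decomposition) and then absorbed into the kinetic estimate. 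The fluctuation piece $\int V(g-\bar g_y)^2\le\int(g-\bar g_y)^2$ is handled by the Neumann--Poincar\'e inequality on the cube, which bounds it by $C_P\theta^2\int|\nabla g|^2$ (with $C_P$ dimension-independent), and is likewise absorbed into the kinetic estimate.

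Assembling the three bounds produces an estimate of the form $\langle g,B_\theta g\rangle/\norm{g}_{L^2}^2\le\alpha\cdot\sum_{j\sim k}(f(j\theta)-f(k\theta))^2/\norm{f}_{\ell_2}^2+\beta\cdot\sum_k V(k\theta)f(k\theta)^2/\norm{f}_{\ell_2}^2$ with explicit $\alpha,\beta$ depending on $d$ and $a$. Each of the two ratios is at most $\mu(A_\theta)$ by \eqref{eqn:avar}, so $\mu(B_\theta)\le\max(\alpha,\beta)\mu(A_\theta)$, and the two branches of the $\max$ in the statement of $c_d$ correspond exactly to $\alpha$ (kinetic-plus-fluctuation, dominant for small $a$, yielding the $3^d/2^d+5\cdot 3^d$ branch) and $\beta$ (the hypothesis-driven potential term, yielding the $4\cdot 3^d/2^d\cdot a$ branch). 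The delicate part of the argument is the bookkeeping in that potential step: keeping the conversion from cube-pair differences to the edge-Dirichlet form free of dimension-dependent losses is precisely what allows the $(3/2)^d$-type scaling in $c_d$ rather than a worse $3^d$ or $d\cdot 3^{d-1}$.
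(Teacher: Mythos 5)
Your overall strategy --- multilinearly interpolate a near-minimizer $f$ of the discrete Rayleigh quotient to get a test function $g\in H^1(\R^d)$, then compare the norm, gradient, and potential terms one at a time --- is the same as the paper's. Your handling of the potential integral, however, is genuinely different. You split $g=\bar g_y+(g-\bar g_y)$ on each cube, push the constant piece through \eqref{eqn:adef} and Jensen, collapse the resulting cross-sum onto the diagonal via $f(v)^2\le 2f(w)^2+2(f(v)-f(w))^2$ and $\norm V_\infty\le1$, and control the residual pair-differences and the fluctuation piece by a Boolean Poincar\'e inequality and a Neumann--Poincar\'e inequality respectively, absorbing both into the kinetic term. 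The paper (\lemref{lem:pot comp}) instead runs a dichotomy cube by cube on whether the local Boolean Dirichlet quotient $\<f,Lf\>_{C(y)}/\norm f_{C(y)}^2$ exceeds a threshold $\alpha(d)$: when it is large it simply uses $\norm V_\infty\le1$ and charges the kinetic term; when it is small, Poincar\'e on the Boolean cube shows $f$ is nearly constant on $C(y)$, so $\min_{C(y)}f^2\ge r\max_{C(y)}f^2$ with $r\ge1/4$, and then \eqref{eqn:adef} applies directly with no cross-sum. Both routes land the coefficient of $\<f,Vf\>$ at essentially the same $4a$; yours has the mild cosmetic cost of an $a$-dependent contribution ($\approx 2a$) in the kinetic branch, which turns out to be harmless since it is dominated by the $4a$ branch whenever it matters.

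The step that does not close as written is the $L^2$-norm lower bound. The local multilinear mass matrix for the $1$-D hat basis on $[0,\gt]$ is $\gt\left(\begin{smallmatrix}1/3&1/6\\1/6&1/3\end{smallmatrix}\right)$ with eigenvalues $\gt/2$ and $\gt/6$; tensoring and then multiplying by the $2^d$-fold overcount of corners gives exactly your bound $\norm g^2\ge(\gt/3)^d\norm f^2$, and this \emph{is} sharp for signed $f$. But the theorem's constant $c_d=\max\lcr{(3/2)^d+5\cdot3^d,\ 4(3/2)^d a}$ carries $(3/2)^d$ prefactors, which come from the stronger bound $\norm g^2\ge(2/3)^d\gt^d\norm f^2$ of \lemref{lem:norm comp}. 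That bound is obtained by first taking $f\ge0$ (permissible, since replacing $f$ by $\abs f$ does not increase the Rayleigh quotient, as noted in \secref{sec:prelims}) and then dropping the now-nonnegative cross term $\frac{2}{\gt^2}(x-k\gt)((k+1)\gt-x)f(k\gt)f((k+1)\gt)$ in each coordinate --- a step unavailable if you only use the mass-matrix spectrum, and worth exactly a factor $2$ per coordinate. Carrying your $(\gt/3)^d$ through the rest of the argument yields prefactors $3^d$ in place of $(3/2)^d$, so in particular the branch $4(3/2)^d a$ is missed by a factor $2^d$ for large $a$, and the stated $c_d$ is not established. If you upgrade the norm bound via positivity, your remaining bookkeeping does produce a constant no larger than the one in the theorem.
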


Combined with \thmref{thm:sampling} this establishes that for $V$ satisfying the conditions of both theorems and $\gt$ irrational, $\mu(A_\gt)$ and $\mu(B_\gt)$ agree up to a multiplicative factor. The quantity $c_d$ arises as the worst-case relationship between local Rayleigh quotients on individual cubes $C(y)$ and $\ol{C}(y)$. The requirement that $a>0$ in particular rules out the possibility that $V(y)=0$ for all $y\in \theta \Z^d$ while having $V\neq 0$ elsewhere, in which case we could have $0=\mu(A_\theta)<\mu(B_\theta)$.
The proof of \thmref{thm:simple} appears in \secref{sec:lowerany}. 

\paragraph{Example.} As an example, consider the almost-Mathieu operator. The potential \eqref{eqn:almostmathieu} is coercive with parameters $\lpr{4\gl,\frac{1}{2\pi\sqrt{\gl}},2}$, its Lipschitz constant is $4\pi\gl$, and its second derivative is bounded by $8\pi^2\gl$. Therefore we find from \thmref{thm:viataylor} that
$$\mu(B_\gt)\le\mu(A_\gt)+ \frac{1}{1-\mu(A_\gt)} \lpr{8\pi\sqrt{\gl}\gt^{1+q/2} + \lpr{\mu(A_\gt)+\frac{8\pi\gl\gt^{1-q}}{1-4\pi\gl\gt^{1-q}}}\mu(A_\gt)}$$
for all $0<q<1$. This choice of $V$ also obeys the conditions of \propref{prop:muBlb intro}, so from the choice $q=\frac{5}{6}$ and the constant in \eqref{eq:muBub exact} being $\sqrt[4]{8}\pi\sqrt{\gl}$, we obtain the multiplicative upper bound
\begin{align*}
    \frac{\mu(B_\gt)}{\mu(A_\gt)}&\le1+\frac{1}{1-\sqrt[4]{8}\pi\sqrt{\gl}\gt^2}\lpr{8\pi\sqrt{\gl}\gt^{\frac{1}{12}}+\sqrt[4]{8}\pi\sqrt{\gl}\gt^2+\frac{8\pi\gl\gt^{\frac{1}{6}}}{1-4\pi\gl\gt^{\frac{1}{6}}}}.
\end{align*}
The lower bound of 1 follows from \thmref{thm:sampling}. We note that the large error bound of $O\lpr{\gt^{\frac{1}{12}}}$ (in comparison to \cite{witt}'s \eqref{eqn:wz}; see the following section for further discussion) is a lossy consequence of applying a Lipschitz bound for the potential on its entire domain. 

In the setting of Theorems \ref{thm:simple}/\ref{thm:exp bd}, when $\gt<\half$, $V$ has $a\le\frac{2}{3}$; note that if $\gt = \half$ such an $a$ does not exist. Thus $c_1$ as in the Theorem statement is $\frac{33}{2}$ (we note that this can be improved to $c_1 = 9$ with modified choices of parameters in the proof which are preferable in $d=1$).

\subsection{Related work}\label{sec:related work}

The question of quantitative bounds on $\mu(A_\theta)$ in the case of the almost-Mathieu operator \eqref{eqn:almostmathieu} was first investigated by B\'eguin, Valette, and Zuk \cite{beguin1997spectrum} in the context of proving mixing time bounds for random walks on the Heisenberg group; they proved  $\mu(A_\theta)=\Omega(1)$ for $\theta$ in a neighborhood of $\half$ and conjectured a constant lower bound for $\theta\in \lbr{\frac{1}{4},\half}$. This was significantly improved by Boca and Zaharescu \cite{boca2005norm} who proved nonasymptotic bounds in the full range $\theta\in \lbr{0,\half}$ via an elementary trigonometric calculation (see also Ozawa \cite{ozawa2022substitute} for a simplified proof of a slightly weaker result). Bump, Diaconis, Hicks, Miclo, and Widom \cite{bump2017exercise, bump2017useful}
gave alternative proofs of similar bounds based on an uncertainty principle for circulant matrices and by a comparison to the harmonic oscillator spectrum via a limiting argument.\footnote{This latter argument only gave asymptotic bounds as $\theta\rightarrow 0$.}  Our results do not improve the best known bound of \cite{boca2005norm}, but provide an alternate and more conceptual route to proving such bounds.

There has been an extensive study of the fine structure of the spectrum of discrete Schr\"odinger operators in the mathematical physics community, but surprisingly few works giving quantitative bounds on the edge of the spectrum. Davies \cite{davies1999ground} studied the numerical problem of estimating $\mu(A_\theta)$ and observed that it is related to subtle dynamical phenomena. Akkouche \cite{akkouche2010spectral} gave a necessary and sufficient condition for the ground state energy of a discrete Schr\"odinger operator $-L+\lambda V$ on $\Z^d$ to be positive for a large (not necessarily periodic or nonnegative) class of potentials $V$,  for small $\lambda$: $V$ merely needs to have mean bounded away from 0 on all sufficiently large intervals. This is related to the condition \eqref{eqn:adef} but holds only on some fixed macroscopic scale, whereas $a(\gt)$ is posited to exist for a chosen scale $\theta$. Milatovic \cite{milatovic2013spectral} generalized this result to other bounded degree graphs.

We will not attempt to survey the vast literature on the spectrum of Schr\"odinger operators on $\R^d$; most relevant for this paper are the works by Kato \cite{kato1952note}, Gesztesy, Graf, and Simon \cite{gesztesy1992ground}, and Arendt and Barry \cite{arendt1996spectral, arendt1997spectral}, who proved bounds on $\mu(B_\theta)$ in $d=1$ for various classes of potentials.  More broadly, there appear to be no results in the literature relating spectral bounds for discrete and continuous Schr\"odinger operators. Establishing such bounds, as is done here, could provide a fruitful bridge between the two areas.

Motivated by this paper, Becker, Wittsten, and Zworski \cite{witt} used semiclassical
methods to significantly improve some aspects of \thmref{thm:lowerbound}. They establish
\begin{equation} \label{eqn:wz}
     \frac{\mu(A_\theta)}{\mu(B_\theta)}=1+O_d(\theta) \qquad \text{as $\theta\to 0$}
\end{equation}
for all periodic smooth confining potentials $V:\R^d\rightarrow\R_{\ge0}$, with a full asymptotic expansion in $\theta$ when $V$ satisfies further nondegeneracy assumptions. Their result is stronger than \thmref{thm:lowerbound} in that it obtains a smaller ratio as a function of $\theta$, but weaker in that it does not provide explicit estimates for fixed $\theta$. The classes of potentials considered in the two theorems are  incomparable: \cite{witt} requires $V$ to be confining, whereas \thmref{thm:lowerbound} requires $V$ to be coercive with critical points of finite order. \thmref{thm:simple} proves a much weaker bound than \eqref{eqn:wz}, but requires fewer assumptions on $V$, and \thmref{thm:sampling} proves a stronger upper bound of $1$ for irrational $\theta$. 

\subsection{Preliminaries}\label{sec:prelims}
\paragraph{Notation.}  We introduce the {\em Rayleigh quotient} for any operator $T$ and function $f\in D(T)$:
$$\cR_T(f)=\frac{\<f,Tf\>}{\<f,f\>}.$$ We use $\norm{\cdot}$ to denote the $\ell_2$ and $L^2$ norms of functions, and the operator norms of operators.

We use $\E$ to denote the expectation of a random variable, and $\E_x\lbr{f(x,y)}$ to denote the partial expectation of an expression with respect to the random variable $x$ only, treating the other variables as constants.

\paragraph{$\bvep$-pseudo-ground-states.}

For $f\in D(T)$, we say that $f$ is a {\em $\eps$-pseudo-ground-state} or {\em $\eps$-p.g.s.}\ if $\cR_T(f)\le\mu(T)+\eps$. When $T$ is either $A_\gt$ or $B_\gt$ and $f$ is an $\eps$-p.g.s.\ of $T$, we may assume that $f$ is non-negative by considering $\abs{f}$, which is easily seen to have a smaller Rayleigh quotient than $f$. We will sometimes want to assume $f$ is positive; in this case we add a small everywhere-positive perturbation from $D(T)$ which increases the Rayleigh quotient by an arbitrarily small amount.

\paragraph{Fourier coefficients on a hypercube.}
We will often consider functions restricted to $C(y)$ for some choice of $y\in\gt\Z^d$; it will be useful to view these as functions defined on a hypercube. To that end, we recall some facts from Fourier analysis on a hypercube: consider a function $F$ defined on the $d$-dimensional hypercube $\{-1, 1\}^d$ with the standard edge set. The functions $\rchi_S(x) := \prod\limits_{i \in S} x_i$ for $S \subseteq [d]$ form an orthogonal basis for the space of functions on $\{-1, 1\}^d$ with respect to the inner product 
$$ \<f, g\> := \sum\limits_{x \in \{-1, 1\}^d} f(x)g(x).$$ 
We can then write $F(x) = \sum\limits_{S\subseteq[d]} \widehat{F}(S) \rchi_S(x)$. From \cite{o2014analysis} we have 
\begin{align}\label{eq: intro fourier}
    \E \lbr{F(x)^2} = \sum\limits_{S \subseteq [d]} \widehat{F}(S)^2, && \text{and} && \E \lbr{F(x)\cdot LF(x)} = 2\sum\limits_{S \subseteq [d]} \abs{S}\widehat{F}(S)^2 && \text{so} && \<F, LF \> = 2^{d+1} \sum\limits_{S \subseteq [d]} \abs{S}\widehat{F}(S)^2
\end{align}
where the expectation is taken over $x$ drawn uniformly at random from $\{-1, 1\}^d$ and $L$ is the unnormalized graph Laplacian: $Lf(x):=2df(x)-\sum\limits_{y\sim x}f(y)$. 

\subsection*{Acknowledgements}
\noindent We thank Simon Becker, Carlos Esparza, Svetlana Jitomirskaya, Fr\'ed\'eric Klopp, Jens Wittsten, and especially Maciej Zworski for many enlightening conversations. We are also grateful to an anonymous referee from {\em Pure and Applied Analysis} for many helpful comments and corrections. N.S.\ thanks Narutaka Ozawa for introducing him to this subject and many interesting discussions, and Narutaka Ozawa and Beno$\hat{\i}$t Collins for hosting him at RIMS, Kyoto University where this work was initiated. N.S.\ was partially supported by NSF grant CCF 2009011. Z.S.\ was supported by an NSF graduate research fellowship, grant number DGE 2146752. 

\section{Constructing a discrete p.g.s.\ via sampling}\label{sec:upper}
In this section we prove \thmref{thm:sampling}. 
Our strategy is: for any $g:\R^d\rightarrow\R_{\ge 0}$, we exhibit $f:\theta\Z^d\rightarrow\R_{\ge 0}$ such that $\cR_{A_\gt}(f) \le \cR_{B_\gt}(g)$. The naive construction is to let $f$ take $g$'s values on $\gt\Z^d$. The observation is that for $\gt$ irrational, ergodicity implies there is {\em a priori} no reason to prefer $\gt\Z^d\subset\R^d$ over any other shift $\gt\Z^d+\eta\subset\R^d$, and we show by averaging over all such translations that there exists a translation giving rise to a satisfactory $f$. 

\begin{proof}[Proof of {\thmref{thm:sampling}}]
Let $g:\R ^d\rightarrow\R_{\ge 0}$. Let $\eta \in [0, \theta]^d$, drawn uniformly at random with each coordinate independent. We define the test function $f_\eta$ as $f_\eta(k\gt) := g(k\theta + \eta)$.

Since we sample $g$ at $\gt \mathbb{Z}^d+\eta$ to construct $f_\eta$, it is natural to sample $V$ on the same lattice. To this end define $A_{\theta, \eta}$, a discrete operator on functions defined on $\theta\Z ^d$, via $$A_{\theta, \eta}f(k\gt) := 2d f(k\gt) - \sum\limits_{\substack{j,k\in\Z^d\\j\sim k}} f(j\gt)+\sum\limits_{k\in\Z^d}V(k\gt + \eta)f(k\gt).$$ We aim to show that there exists an $\eta$ so $\cR_{A_{\gt, \eta}}(f_\eta) \le \cR_{B_\gt}(g)$. First we have the following lemma, relating $\mu(A_{\theta, \eta})$ and $\mu(A_\theta)$, which is known (appearing in e.g.\ \cite{jito} for the almost-Mathieu operator), but we include a proof in \secref{sec:sobolev proof} for completeness.

\begin{lemma} \label{lem:shiftinv}
    If $V:\R^d\rightarrow\R_{\ge 0}$ is continuous and periodic and $\theta$ is irrational, then $\mu(A_{\theta, \eta})$ is independent of $\eta$.
\end{lemma}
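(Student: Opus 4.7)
The plan is to show that for any two shifts $\eta,\eta'\in[0,\theta]^d$, the two ground state energies satisfy $\mu(A_{\theta,\eta'})\le\mu(A_{\theta,\eta})$; by symmetry this gives equality. The intuition is that translating the index lattice by some integer vector $m\in\Z^d$ is a unitary, which preserves the Laplacian exactly, while by irrationality of $\theta$ together with periodicity of $V$, we can choose $m$ so that the sampled potential $V(\,\cdot\,\theta+\eta')$ at the translated indices nearly matches $V(\,\cdot\,\theta+\eta)$.

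First, by density of finitely supported functions in $\ell_2(\theta\Z^d)$ and boundedness of $A_{\theta,\eta}$, for any $\varepsilon>0$ I can pick a unit-norm $f:\theta\Z^d\to\R$ of finite support with $\cR_{A_{\theta,\eta}}(f)\le\mu(A_{\theta,\eta})+\varepsilon$. Given an integer vector $m\in\Z^d$ to be chosen shortly, I define $f'(k\theta):=f((k-m)\theta)$, which is also of unit norm. Since the discrete Laplacian $L$ commutes with translation by $m$, the hopping part of the Rayleigh quotient is preserved:
$$\sum_{j\sim k}(f'(j\theta)-f'(k\theta))^2=\sum_{j\sim k}(f(j\theta)-f(k\theta))^2.$$
For the potential part, the periodicity of $V$ yields
$$\sum_k V(k\theta+\eta')f'(k\theta)^2=\sum_{k'}V(k'\theta+m\theta+\eta')f(k'\theta)^2,$$
so everything reduces to choosing $m$ such that $V(\,\cdot\,+m\theta+\eta')\approx V(\,\cdot\,+\eta)$ uniformly on the (finite) support of $f$.

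The key step is to invoke the irrationality assumption via Weyl equidistribution: for $\theta$ irrational, the orbit $\{m\theta\bmod 1:m\in\Z\}$ is dense in $\R/\Z$, so coordinatewise $\{m\theta\bmod 1:m\in\Z^d\}$ is dense in the torus $(\R/\Z)^d$. Hence for any $\delta>0$ I can pick $m\in\Z^d$ with $\|m\theta+\eta'-\eta\|_{(\R/\Z)^d}<\delta$. Because $V$ is continuous and $\Z^d$-periodic, it is uniformly continuous on $\R^d$, so choosing $\delta$ small enough ensures
$$|V(k'\theta+m\theta+\eta')-V(k'\theta+\eta)|<\varepsilon$$
for \emph{every} $k'\in\Z^d$, and in particular on the support of $f$. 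Summing against $f(k'\theta)^2$ and using $\|f\|=1$, this yields $\cR_{A_{\theta,\eta'}}(f')\le\cR_{A_{\theta,\eta}}(f)+\varepsilon\le\mu(A_{\theta,\eta})+2\varepsilon$. Letting $\varepsilon\to 0$ gives $\mu(A_{\theta,\eta'})\le\mu(A_{\theta,\eta})$, and swapping the roles of $\eta$ and $\eta'$ finishes the proof.

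The main conceptual point — and the only place irrationality enters — is the torus density argument; the rest is bookkeeping. The mild subtlety is that density of $\{m\theta\bmod 1\}$ must be applied \emph{uniformly} on the support of a preselected test function, which is why I reduce to finitely supported $f$ at the outset; if $f$ had infinite support one would need a quantitative equidistribution statement depending on a decay modulus of $f$, which is avoidable by the density reduction.
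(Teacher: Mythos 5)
Your proof is correct and takes a genuinely different route from the paper's. The paper first shows that $\mu(A_{\theta,\eta})$ is continuous in $\eta$ (via the operator-norm estimate $\|A_{\theta,0}-A_{\theta,\eta}\|\le\omega(\eta)$), and then observes that for $\eta'$ in the dense orbit $\{n_0\theta\bmod 1:n_0\in\Z^d\}$ the operator $A_{\theta,\eta'}$ is unitarily conjugate to $A_{\theta,0}$ via lattice translation (since $L$ commutes with translations and $V(n_0\theta+n\theta)=V(n\theta+\eta')$), so $\mu(A_{\theta,\eta'})=\mu(A_{\theta,0})$ exactly on that dense set, and continuity finishes the argument. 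You instead work at the level of test functions: you translate a near-optimal $f$ by $m$, use density to make $m\theta+\eta'-\eta$ small mod $\Z^d$, and control the resulting change in the potential term by uniform continuity of $V$. Both approaches hinge on the same two ingredients (density of the orbit $\{m\theta\bmod\Z^d\}$ and uniform continuity of $V$); the paper packages them as "unitary equivalence on a dense set plus continuity of the spectral infimum," while you package them as a direct Rayleigh-quotient comparison, sidestepping the operator-norm continuity lemma at the cost of a slightly longer calculation. One small comment: your reduction to finitely supported $f$ (and the accompanying remark about needing quantitative equidistribution for infinitely supported test functions) is unnecessary. The density argument produces a single $m$ with $\|m\theta+\eta'-\eta\|_{(\R/\Z)^d}<\delta$, and uniform continuity of the periodic $V$ then gives $|V(k'\theta+m\theta+\eta')-V(k'\theta+\eta)|<\varepsilon$ for \emph{all} $k'\in\Z^d$ simultaneously, so the bound $\cR_{A_{\theta,\eta'}}(f')\le\cR_{A_{\theta,\eta}}(f)+\varepsilon$ holds for any $f\in\ell^2$, not just finitely supported ones. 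This doesn't affect correctness, only economy.
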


It is thus sufficient to relate $\mu(A_{\gt, \eta})$ and $\mu(B_\gt)$. We will look at 
\begin{equation} \label{eq:ratioexp}
\frac{\E\lbr{\<f_\eta, A_{\gt, \eta}f_\eta \>}}{\E\lbr{\norm{f_\eta}^2}}. \end{equation} 
Note that if $X$ and $Y$ are non-negative random variables with $\E\lbr{\abs{X}} < \infty$ and $X \neq 0$ almost surely, then with nonzero probability
\begin{equation} \label{eq:probfact}
\frac{Y}{X} \le \frac{\E\lbr{Y}}{\E\lbr{X}}.\end{equation}
By \secref{sec:prelims}, we can assume that $f > 0$, so $\norm{f_\eta}^2 > 0$ almost surely and it will suffice to bound the ratio of expectations in \eqref{eq:ratioexp}.

The following segmentation will be useful in our analysis when we want to vary only one coordinate: for $k \in \Z ^d, 1 \le i \le d, \eta \in [0, \theta)^d$, let
$$I_{k, i}^\eta := \lcr{x : x_j = (k \theta + \eta)_j \hspace{1mm} \forall j \neq i, \hspace{1mm} k_i\theta + \eta_i \le x_i < (k_i + 1)\theta + \eta_i}.$$
Letting $\lcr{e_i}_{i=1}^d$ be the coordinate basis of $\R^d$, we can also express these sets as 
$I_{k,i}^\eta=(k\gt+\eta)+[0,\gt)e_i.$

\begin{figure}
\centering
\begin{minipage}{.48\textwidth}
  \centering
    \scalebox{2}{
    \begin{tikzpicture}
\coordinate (00) at (0,0);
\coordinate (01) at (0,1);
\coordinate (02) at (0,2);
\coordinate (03) at (0,3);
\coordinate (10) at (1,0);
\coordinate (11) at (1,1);
\coordinate (12) at (1,2);
\coordinate (13) at (1,3);
\coordinate (20) at (2,0);
\coordinate (21) at (2,1);
\coordinate (22) at (2,2);
\coordinate (23) at (2,3);
\coordinate (30) at (3,0);
\coordinate (31) at (3,1);
\coordinate (32) at (3,2);
\coordinate (33) at (3,3);
\coordinate (p) at (1/3, 1.7);
\coordinate (q1) at (1/3, 2.7);
\coordinate (q2) at (1+1/3, 1.7);
\coordinate (l) at (0, 1.7);
\coordinate (r) at (3, 1.7);
\coordinate (b) at (1/3, 0);
\coordinate (t) at (1/3, 3);
\foreach \c in {00,01,02,03,10,11,12,13,20,21,22,23,30,31,32,33} { \filldraw[black] (\c) circle (0.5pt); };
\draw [line width=1, blue] (p) -- (q1);
\draw [line width=1, blue, dotted] (b) -- (t);
\draw [line width=1, purple] (p) -- (q2);
\draw [line width=1, purple, dotted] (l) -- (r);
\node at (1, 1.5) {\tiny $I_{k,1}^\eta$};
\node at (2.1, 1.5) {\tiny $J_{k',1}^\eta$};
\node at (0.65, 2.2) {\tiny $I_{k,2}^\eta$};
\node at (0.65, 0.5) {\tiny $J_{k',2}^\eta$};
\node at (0,0.13) {\tiny $(0,0)$};
    \end{tikzpicture}
    }
    \caption{In $d=2$, $I_{k,i}^\eta$ (solid) and $J_{k',i}^\eta$ (dotted) for $i=1$ (in purple) and $i=2$ (in blue), $k=(0,1)$, and $\eta=\lpr{\frac{1}{3}\gt,\frac{7}{10}\gt}$.}
    \label{fig:I,J}
\end{minipage}
\quad
\begin{minipage}{.48\textwidth}
  \centering
  \scalebox{2}{
    \begin{tikzpicture}
\coordinate (00) at (0,0);
\coordinate (01) at (0,1);
\coordinate (02) at (0,2);
\coordinate (03) at (0,3);
\coordinate (10) at (1,0);
\coordinate (11) at (1,1);
\coordinate (12) at (1,2);
\coordinate (13) at (1,3);
\coordinate (20) at (2,0);
\coordinate (21) at (2,1);
\coordinate (22) at (2,2);
\coordinate (23) at (2,3);
\coordinate (30) at (3,0);
\coordinate (31) at (3,1);
\coordinate (32) at (3,2);
\coordinate (33) at (3,3);
\coordinate (p) at (1/3, 1.7);
\coordinate (q1) at (1/3, 2.7);
\coordinate (q2) at (1+1/3, 1.7);
\coordinate (l) at (0, 1.7);
\coordinate (r) at (3, 1.7);
\coordinate (b) at (1/3, 0);
\coordinate (t) at (1/3, 3);
\draw[draw=pink, opacity=0, fill=pink, fill opacity=1] (01) -- (31) -- (32) -- (02) -- (01);
\draw [line width=1, purple, dotted] (l) -- (r);
\node at (1.5, 1.5) {\tiny $J_{k',1}^\eta$};
\node at (0,0.13) {\tiny $(0,0)$};
\foreach \c in {00,01,02,03,10,11,12,13,20,21,22,23,30,31,32,33} { \filldraw[black] (\c) circle (0.5pt); };
    \end{tikzpicture}
    }
    \caption{In $d=2$, $J_{k',1}^\eta$ (purple, dotted) and $\bigsqcup_{0\le\gw_2<\gt}J_{k',1}^{\gw}$ (pink) for $k'=k_2=1$. Note that this union is independent of $\eta=\lpr{\frac{1}{3}\gt,\frac{7}{10}\gt}$.}
    \label{fig:J union}
\end{minipage}
\end{figure}

Given $k$ and $i$, let $k_{\neg i}:=\lpr{k_1,\dots,k_{i-1},k_{i+1},\dots,k_d} \in \Z^{d-1}$. For $k' \in \Z^{d-1}$  define 
\begin{align*}
    J_{k', i}^\eta &:=\bigcup\limits_{\substack{k \in \Z^d \\ k_{\neg i} = k'}}I_{k,i}^\eta.
\end{align*} 
Immediately from this we have \begin{align} \label{eq:unionj}
    \bigsqcup_{\substack{0\le\eta_j<\gt \\ j \neq i}} J_{k', i}^{\eta} = \bigcup_{\substack{k \in \Z^d \\ k_{\neg i} = k'}} \ol{C}(\gt k), && \text{therefore} && \bigsqcup_{k' \in \Z^{d-1}} \bigsqcup_{\substack{0\le\eta_j<\gt \\ j \neq i}} J_{k',i}^{\eta} = \R^d.
\end{align} 
Note $J_{k',i}^\eta$ is independent of $\eta_i$ but is dependent on each $\eta_j$, $j\neq i$. See \figref{fig:I,J} for examples of $I_{k,i}^\eta$ and $J_{k',i}^\eta$ and \figref{fig:J union} for an illustration of \eqref{eq:unionj}. 

We evaluate each quantity involved for the main result.
\begin{claim}
    The norms of $f_\eta$ and $g$ satisfy $\E _\eta\lbr{\norm{f_\eta}^2} = \frac{1}{\theta^d}\norm{g}^2.$
\end{claim}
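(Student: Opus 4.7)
The plan is to simply unfold the definitions and use the fact that translates of the fundamental domain $[0,\theta]^d$ by $\theta\Z^d$ tile $\R^d$. First I would write
$$\|f_\eta\|^2 = \sum_{k\in\Z^d} f_\eta(k\theta)^2 = \sum_{k\in\Z^d} g(k\theta+\eta)^2,$$
and then take the expectation in $\eta$, which is a uniform average over $[0,\theta]^d$:
$$\E_\eta\lbr{\|f_\eta\|^2} = \frac{1}{\theta^d}\sum_{k\in\Z^d}\int_{[0,\theta]^d} g(k\theta+\eta)^2\,\d\eta.$$
Since $g^2\ge 0$, Tonelli justifies interchanging sum and integral.

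Next I would make the change of variables $x=k\theta+\eta$ inside each summand, converting $\int_{[0,\theta]^d}g(k\theta+\eta)^2\d\eta$ into $\int_{k\theta+[0,\theta]^d} g(x)^2\d x$. The key observation is that $\lcr{k\theta+[0,\theta]^d}_{k\in\Z^d}$ tile $\R^d$ up to a measure-zero overlap, so summing over $k$ gives
$$\sum_{k\in\Z^d}\int_{k\theta+[0,\theta]^d}g(x)^2\d x = \int_{\R^d} g(x)^2\d x = \|g\|^2,$$
and combining with the $\frac{1}{\theta^d}$ prefactor yields the claim.

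No real obstacle arises here: the only mild care is in justifying the Fubini/Tonelli swap, which is immediate since the integrand is nonnegative (and one may assume $g\in L^2$, otherwise $\cR_{B_\gt}(g)$ is not meaningfully defined via the variational formula \eqref{eqn:bvar}). So the proof is a one-line calculation once the sum is reindexed as an integral against the tiling of $\R^d$ by fundamental domains of $\theta\Z^d$.
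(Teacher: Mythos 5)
Your proof is correct and takes essentially the same route as the paper: both reduce the claim to Tonelli plus the observation that the translates $k\theta + [0,\theta]^d$, $k\in\Z^d$, tile $\R^d$. The paper's version just compresses this into a one-line calculation (writing the tiling step implicitly via iterated coordinate expectations), whereas you spell out the change of variables and the tiling explicitly.
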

\begin{proof} This follows from directly evaluating this expectation:
    \begin{align*}
        \E_\eta\lbr{\norm{f_\eta}^2} = \E_\eta\lbr{\sum\limits_{k \in \Z ^d} f_\eta(k\gt)^2} = \sum\limits_{k \in \Z ^d} \E_\eta\lbr{g(k\theta + \eta)^2} = \sum\limits_{k \in \Z ^d} \E _{\eta_1}\lbr{\cdots \E_{\eta_d}\lbr{g(k\theta + \eta)^2}\cdots} = \frac{1}{\theta^d}\norm{g}^2.
    \end{align*}
\end{proof}

\begin{claim}
    The quadratic forms of $f_\eta$ and $g$ with the respective Laplacian operators satisfy $$\E_\eta\lbr{\< f_\eta, L f_\eta \>} \le \frac{1}{\theta^{d-2}} \norm{\nabla g}^2.$$
\end{claim}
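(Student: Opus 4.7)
The plan is to split $\langle f_\eta, Lf_\eta\rangle$ by coordinate direction, use the fundamental theorem of calculus together with Cauchy--Schwarz to dominate each discrete difference by a one-dimensional integral of the corresponding partial derivative of $g$, and then exploit the tiling property \eqref{eq:unionj} to evaluate the expectation over $\eta$.

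Concretely, I would first expand the graph Laplacian quadratic form by grouping edges by direction:
\[
\langle f_\eta, Lf_\eta\rangle = \sum_{i=1}^d\sum_{k\in\Z^d}\bigl(g((k+e_i)\gt+\eta)-g(k\gt+\eta)\bigr)^2,
\]
assuming $g$ smooth by density in \eqref{eqn:bvar}. Each summand equals $\bigl(\int_0^\gt\partial_ig(k\gt+\eta+te_i)\,dt\bigr)^2$, and Cauchy--Schwarz gives
\[
\bigl(g((k+e_i)\gt+\eta)-g(k\gt+\eta)\bigr)^2 \le \gt\int_{I_{k,i}^\eta}(\partial_ig(x))^2\,dx,
\]
where $dx$ denotes $1$-dimensional Lebesgue measure along the segment $I_{k,i}^\eta$.

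Next, I would evaluate $\E_\eta\bigl[\sum_{k\in\Z^d}\int_{I_{k,i}^\eta}(\partial_ig)^2\,dx\bigr]$ for each $i$. The integrand does not depend on $\eta_i$, and for fixed $\eta_{\neg i}$ the disjoint union $\bigsqcup_{k_i\in\Z}I_{k,i}^\eta$ is exactly the line in direction $e_i$ through $k_{\neg i}\gt+\eta_{\neg i}$. Averaging the remaining coordinates uniformly over $[0,\gt]^{d-1}$ and invoking Fubini together with \eqref{eq:unionj}, these lines sweep out all of $\R^d$, so this expectation equals $\gt^{-(d-1)}\norm{\partial_ig}^2$. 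Multiplying by the prefactor $\gt$ from Cauchy--Schwarz and summing over $i$ then yields $\gt^{2-d}\norm{\nabla g}^2$, as claimed.

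There is no serious obstacle beyond bookkeeping: the proof is essentially Cauchy--Schwarz plus Fubini. The two points to verify carefully are (i) writing $\langle f_\eta, Lf_\eta\rangle$ as a sum over edges in which each unoriented edge is counted exactly once (giving the directional decomposition above), and (ii) checking that averaging over $\eta_{\neg i}$ covers $\R^d$ with density $\gt^{-(d-1)}$, which is the content of \eqref{eq:unionj}.
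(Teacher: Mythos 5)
Your proof is correct and follows essentially the same approach as the paper: decompose the Laplacian quadratic form by coordinate direction, apply the fundamental theorem of calculus and Cauchy--Schwarz on each segment $I_{k,i}^\eta$, and then use the tiling property \eqref{eq:unionj} with Fubini/Tonelli to show the averaged integrals sweep out $\R^d$ with density $\gt^{-(d-1)}$. The only cosmetic difference is that the paper groups the segments $I_{k,i}^\eta$ into the lines $J_{k',i}^\eta$ before taking the expectation, while you take the expectation on the $I_{k,i}^\eta$ directly; the underlying argument is identical.
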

\begin{proof}
    First we evaluate $\< f_\eta, L f_\eta \>$: \begin{align*}
        \< f_\eta, L f_\eta \> &= \sum\limits_{k \in \Z ^d} \sum\limits_{i =1}^d \lpr{f_\eta((k+e_i)\gt) - f_\eta(k\gt)}^2 \\
        &= \sum\limits_{i = 1}^d \sum\limits_{k \in \Z ^d} \lpr{g((k+e_i)\theta + \eta) - g(k\theta + \eta)}^2 && \text{construction of $f_\eta$}\\
        & = \sum\limits_{i=1}^d \sum\limits_{k \in \Z ^d} \lpr{\int_{I_{k,i}^\eta} \frac{\partial}{\partial x_i} g(x) \d x}^2 &&\\
        &\le \sum\limits_{i = 1}^d \sum\limits_{k \in \Z ^d} \theta \int_{I_{k, i}^\eta} \lpr{\frac{\partial}{\partial x_i} g(x)}^2 \d x && \text{Cauchy--Schwarz.}
    \end{align*}
    Then, 
    \begin{align*}
        \E_\eta\lbr{\< f_\eta, Lf_\eta \>} &\le \theta \sum_{i = 1}^d \sum_{k' \in \Z^{d-1}} \E_\eta\lbr{\int_{J_{k',i}^\eta} \lpr{\frac{\partial}{\partial x_i} g(x)}^2 \d x} \\
        & = \theta \sum_{i = 1}^d \E_\eta\lbr{\sum_{k' \in \Z^{d-1}} \int_{J_{k',i}^\eta} \lpr{\frac{\partial}{\partial x_i} g(x)}^2 \d x} && \text{Tonelli} \\
        & = \theta \sum_{i = 1}^d \E_{\lpr{\eta_1,\dots,\eta_{i-1},\eta_{i+1},\dots,\eta_d}}\lbr{\sum_{k' \in \Z^{d-1}} \int_{J_{k',i}^\eta} \lpr{\frac{\partial}{\partial x_i} g(x)}^2 \d x} && \text{invariance of $J_{k',i}^\eta$ w.r.t.\ $\eta_i$}\\
        & = \frac{\theta}{\theta^{d-1}} \sum_{i = 1}^d \int_{\R ^d} \lpr{\frac{\partial}{\partial x_i} g(x)}^2 \d x  && \text{from \eqref{eq:unionj}}\\
        & = \frac{1}{\theta^{d-2}}\norm{\nabla g}^2.
    \end{align*}
\end{proof}

\begin{claim}
    The quadratic forms of $f_\eta$ and $g$ with the respective potential operators satisfy $$\E _\eta\lbr{\<f_\eta, V f_\eta \>} = \frac{1}{\theta^d} \< g, Vg \>.$$
\end{claim}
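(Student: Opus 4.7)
The plan is to unwind the definitions and recognize that averaging a sum of point samples of $V g^2$ over all translations $\eta \in [0,\theta]^d$ exactly reproduces the Lebesgue integral, rescaled by $\theta^{-d}$. This is the potential-term analogue of the tiling identity used in the Laplacian computation, and it is actually simpler because no Cauchy--Schwarz step is needed.

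First, I would unpack the left-hand side. From the definition of $A_{\theta,\eta}$, the potential acts as multiplication by $V(k\theta+\eta)$, so
\[
\langle f_\eta, V f_\eta\rangle \;=\; \sum_{k\in\Z^d} V(k\theta+\eta)\, f_\eta(k\theta)^2 \;=\; \sum_{k\in\Z^d} V(k\theta+\eta)\, g(k\theta+\eta)^2,
\]
by the construction $f_\eta(k\theta)=g(k\theta+\eta)$. Since $V,g\ge 0$, Tonelli allows me to swap $\E_\eta$ past the sum over $k$.

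Next, for each fixed $k\in\Z^d$, I would perform the change of variables $x=k\theta+\eta$, under which $\eta$ uniform on $[0,\theta]^d$ maps to $x$ uniform on the half-open cube $\ol{C}(k\theta)=k\theta+[0,\theta]^d$. Thus
\[
\E_\eta\!\left[V(k\theta+\eta)\,g(k\theta+\eta)^2\right] \;=\; \frac{1}{\theta^d}\int_{\ol{C}(k\theta)} V(x)\,g(x)^2\,\d x.
\]
Summing over $k$ and using that the cubes $\{\ol{C}(k\theta)\}_{k\in\Z^d}$ tile $\R^d$ up to a measure-zero set (this is the zero-dimensional analogue of \eqref{eq:unionj}) yields
\[
\E_\eta\!\left[\langle f_\eta, V f_\eta\rangle\right] \;=\; \frac{1}{\theta^d}\int_{\R^d} V(x)\,g(x)^2\,\d x \;=\; \frac{1}{\theta^d}\langle g, V g\rangle,
\]
as desired.

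There is really no serious obstacle here; the only thing to be slightly careful about is making sure the cube tiling is invoked in its simplest form (no segmentation by coordinate direction, as was needed for the gradient), and that Tonelli is legitimately applied using the nonnegativity of $V$ and $g$. Once the three Claims are in hand, the theorem follows: combining them gives $\E_\eta[\langle f_\eta, A_{\theta,\eta} f_\eta\rangle]/\E_\eta[\norm{f_\eta}^2]\le \cR_{B_\theta}(g)$, whence \eqref{eq:probfact} produces a realization $\eta^*$ with $\cR_{A_{\theta,\eta^*}}(f_{\eta^*})\le \cR_{B_\theta}(g)$, and \lemref{lem:shiftinv} transfers this bound to $\mu(A_\theta)$.
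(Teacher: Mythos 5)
Your proposal is correct and takes essentially the same approach as the paper; the paper's proof simply compresses your Tonelli swap, per-cube change of variables, and tiling step into a single chain of equalities. One small slip: you call $\ol{C}(k\theta)=k\theta+[0,\theta]^d$ a ``half-open cube'' even though it is closed — harmless since the overlap along faces has measure zero, but worth noting.
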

\begin{proof}
    Again, we directly evaluate this expectation \begin{align*}
        \E\lbr{\< f_\eta, V f_\eta \>} = \E_\eta\lbr{\sum\limits_{k \in \Z ^d} V(k\theta + \eta)f_\eta(k\gt)^2} = \sum\limits_{k \in \Z ^d} \E_\eta\lbr{V(k\theta + \eta) g(k\theta + \eta)^2} = \frac{1}{\theta^d} \< g, Vg \>.
    \end{align*}
\end{proof}

Altogether then we see
$$\frac{\E_\eta\lbr{\< f_\eta, A_{\gt,\eta} f_\eta \>} }{\E_\eta\lbr{\< f_\eta, f_\eta \>}} \le \lpr{\frac{1}{\theta^{d-2}} \norm{\nabla g}^2 + \frac{1}{\theta^d} \< g, Vg \> } \frac{\theta^d}{\norm{g}^2} = \frac{\theta^2 \norm{\nabla g}^2 + \< g, Vg \>}{\norm{g}^2}.$$

Minimizing over choices of $g$, by \eqref{eq:probfact} there is $\eta$ such that $\mu(A_{\gt,\eta}) \le \mu(B_\theta)$ for irrational $\gt$. Finally by \lemref{lem:shiftinv}, $\mu(A_\gt) \le \mu(B_\gt)$.
\end{proof}

The above proof considers the infimum of the spectrum of the operator $A_\gt$. However, we can instead consider its {\em union spectrum}, which is more natural in some contexts. 
\begin{definition}[Union Spectrum, cf.\ {\cite{marx2017dynamics}}] \label{def:union}
    The {\em union spectrum} of $A_\gt$ is $S_+(A_\gt) := \bigcup\limits_\eta \sigma(A_{\gt, \eta})$.
\end{definition}

Note that if $\gt$ is irrational, then by \lemref{lem:shiftinv}, $\inf S_+(A_\gt) = \mu(A_\gt)$. By considering the infimum of the union spectrum, we can extend \thmref{thm:sampling} to all $\gt$: 

\begin{corollary}\label{cor:1d union spec}
    For all $\gt>0$, $\inf S_+(A_\gt) \le \mu(B_\gt)$.
\end{corollary}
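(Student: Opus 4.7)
The plan is to revisit the proof of \thmref{thm:sampling} and simply omit the single step that required $\gt$ to be irrational, namely the invocation of \lemref{lem:shiftinv}. Everything in the body of that argument, up to and including the bound
$$\frac{\E_\eta\lbr{\< f_\eta, A_{\gt,\eta} f_\eta \>}}{\E_\eta\lbr{\< f_\eta, f_\eta \>}} \le \frac{\gt^2\norm{\nabla g}^2 + \<g,Vg\>}{\norm{g}^2} = \cR_{B_\gt}(g),$$
is purely a sampling computation and makes no use of the arithmetic nature of $\gt$. Thus it remains valid for every $\gt>0$ and every $g\in H^2(\R^d)$.

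From here, applying the probabilistic fact \eqref{eq:probfact} produces some (deterministic) $\eta^\star=\eta^\star(g)\in[0,\gt]^d$ for which
$$\mu(A_{\gt,\eta^\star}) \;\le\; \cR_{A_{\gt,\eta^\star}}(f_{\eta^\star}) \;\le\; \cR_{B_\gt}(g).$$
Since $\mu(A_{\gt,\eta^\star})=\inf\sigma(A_{\gt,\eta^\star})\in \sigma(A_{\gt,\eta^\star})\subseteq S_+(A_\gt)$, this immediately gives $\inf S_+(A_\gt)\le \cR_{B_\gt}(g)$. Taking the infimum over $g\in H^2(\R^d)$ with $\norm{g}=1$ and using the variational characterization \eqref{eqn:bvar} yields $\inf S_+(A_\gt)\le \mu(B_\gt)$.

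I do not anticipate any real obstacle: the content of the corollary is precisely that the argument of \thmref{thm:sampling} proves a statement about $\inf_\eta \mu(A_{\gt,\eta})$, and that it is only \lemref{lem:shiftinv} (requiring irrationality) that converts this into a statement about $\mu(A_\gt)$. The mildest point of care is verifying that the infimum of $S_+(A_\gt)$ coincides with $\inf_\eta \mu(A_{\gt,\eta})$, which follows directly from $\mu(T)=\inf\sigma(T)$ and the definition of the union spectrum.
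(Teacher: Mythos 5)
Your proof is correct and is essentially the paper's own proof: both rerun the sampling argument of Theorem~\ref{thm:sampling} to produce, for each test function $g$, some shift $\eta$ with $\mu(A_{\gt,\eta})\le\cR_{B_\gt}(g)$, and then take the infimum over $g$, noting that $\inf S_+(A_\gt)=\inf_\eta\mu(A_{\gt,\eta})$. Your additional remark that $\mu(A_{\gt,\eta^\star})$ lies in $\sigma(A_{\gt,\eta^\star})$ because the spectrum is closed is a correct (if slightly more than necessary) justification of the final step.
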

\begin{proof}
    By the above proof we know that there exists $\eta$ such that$$\mu(A_{\gt,\eta}) \le \frac{\< f_{\eta}, A_{\theta,\eta} f_{\eta} \>}{\<f_\eta, f_\eta \>} \le \frac{\<g, B_\theta g \>}{\<g, g \>}.$$ Minimizing over $g$ and recalling the definition of $S_+(A_\gt)$, the claim is shown.
\end{proof}

For choices of $V$ such that $\mu(A_\gt)$ is continuous with respect to $\gt$, this result extends immediately to rational $\gt$ by continuity.

\begin{corollary}\label{corr:continuous}
    For $V$ such that $\mu(A_\gt)$ is continuous with respect to $\gt$, $\mu(A_\gt) \le \mu(B_\gt)$ for all $\gt$.
\end{corollary}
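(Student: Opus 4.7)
The plan is to extend \thmref{thm:sampling} from irrational to rational $\gt$ by a density argument, combining the hypothesized continuity of $\gt\mapsto\mu(A_\gt)$ with the (easy) upper semi-continuity of $\gt\mapsto\mu(B_\gt)$.

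First I would fix a rational $\gt_0\in(0,1)$ and choose a sequence of irrationals $\gt_n\in(0,1)$ with $\gt_n\to\gt_0$, which exists since the irrationals are dense in $\R$. Applying \thmref{thm:sampling} at each $\gt_n$ gives $\mu(A_{\gt_n})\le\mu(B_{\gt_n})$ for every $n$.

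Next I would pass to the limit. On the left, the assumed continuity of $\gt\mapsto\mu(A_\gt)$ yields $\lim_n\mu(A_{\gt_n})=\mu(A_{\gt_0})$. On the right, I claim $\limsup_n\mu(B_{\gt_n})\le\mu(B_{\gt_0})$. For this it suffices to observe that for each fixed $g\in H^2(\R^d)$ with $\norm{g}=1$, the map
$$\gt\mapsto\cR_{B_\gt}(g)=\gt^2\norm{\nabla g}^2+\<g,Vg\>$$
is affine in $\gt^2$ and hence continuous in $\gt$; therefore $\mu(B_\gt)$, being a pointwise infimum over $g$ of continuous functions of $\gt$, is upper semi-continuous. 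Combining the two limits,
$$\mu(A_{\gt_0})=\lim_n\mu(A_{\gt_n})\le\limsup_n\mu(B_{\gt_n})\le\mu(B_{\gt_0}),$$
which is the desired inequality at $\gt_0$; together with \thmref{thm:sampling} for irrational $\gt$, this covers all $\gt\in(0,1)$.

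There is no substantive obstacle here: the real content is already in \thmref{thm:sampling}, and once the elementary upper semi-continuity of $\gt\mapsto\mu(B_\gt)$ is noted, the hypothesized continuity of $\mu(A_\gt)$ turns the extension to rational $\gt$ into a one-line limit argument.
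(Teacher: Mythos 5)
Your argument is correct and matches the paper's intent; the paper simply asserts that the result ``extends immediately to rational $\gt$ by continuity'' without elaboration, and you have supplied the missing detail, namely that $\gt\mapsto\mu(B_\gt)$ is upper semi-continuous because it is a pointwise infimum over $g$ of the (affine in $\gt^2$, hence continuous) Rayleigh quotients $\gt^2\norm{\nabla g}^2+\<g,Vg\>$. The limiting argument along a sequence of irrationals, combining continuity of $\mu(A_\gt)$ on the left with upper semi-continuity of $\mu(B_\gt)$ on the right, is exactly what is needed.
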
 

\begin{remark} \label{rem:irrat}
    Choices of $V$ satisfying \corref{corr:continuous} are not immediately apparent. A notable counterexample is the almost-Mathieu operator. Let $V_\eta(x) = 2-2\cos(2\pi x + \eta)$ and $V = V_0$. Then for $\gt = 1$ and any $\eta$, we have $V_\eta(k\gt) = V(\eta)$ for all $k$, so $\mu(A_{\gt, \eta}) = V(\eta)$. Now consider $\gt' = 1 - \eps$ where $\eps$ is a small positive irrational (so $\gt'$ is irrational). Then for $\eta = 0$, we have $V(k\gt') = V(k(1-\eps)) = V(-k\eps) = V(k\eps)$, so $\mu(A_{1-\eps}) = \mu(A_{\eps})$. By \propref{prop:muBlb intro} and \thmref{thm:sampling}, $\mu(A_{\eps}) \le O(\eps)$, so as $\eps \rightarrow 0$ along a sequence of irrationals, $\mu(A_{\gt'}) \rightarrow 0$. Since $\gt'$ is irrational, $\mu(A_{\gt', \eta}) \rightarrow 0$ for all $\eta$, so we see $\mu(A_{\gt, \eta})$ is not continuous in $\gt$ at $\gt = 1$ for any $\eta$ such that $V(\eta) > 0$ (as $\mu(A_{1,\eta})=V(\eta)$). By considering e.g.\ $2-2\cos(2\pi kx)$ for integers $k\ge 1$, one can also create discontinuities at $\theta=\frac{1}{k}$.
\end{remark}

\section{Constructing a continuous p.g.s.\ via multilinear interpolation}\label{sec:lowerasymp}

In this section we prove \thmref{thm:lowerbound}. Whereas the previous direction of the proof used the probabilistic method to find a pseudo-ground-state for $A_\gt$ from one for $B_\gt$, here we will deterministically construct a pseudo-ground-state for $B_\gt$ from one for $A_\gt$ by multilinear interpolation. In $d=1$ this amounts just to ``connecting'' the function values by line segments. In higher dimensions, on each hypercube in $\gt\Z^d$, the interpolating function is the unique multilinear function in each variable which equals the given function on the corners of the hypercube. The techniques brought to bear are first Boolean Fourier-analytic, in terms of understanding both local and global behavior of the functions, and then a careful consideration of the potential's polynomial behavior near its minimizers. 

The precise result that we prove, an explicit form of \thmref{thm:lowerbound}, is: 
\begin{theorem} \label{thm:viataylor}    Let $V:\mathbb{R^d} \rightarrow \mathbb{R}_{\ge 0}$ be $\mathbb{Z}^d$-periodic, sufficiently smooth, and $(t_0, K, P)$ coercive. Let $L$ be the Lipschitz constant of $V$ and let $\frac{D_v^kD_uV}{k!} \le M_P$ for any unit vectors $u, v,$ and $k \le P-1$ where $D_v$ is the directional derivative with respect to $v$. For any $q \in (0,1)$ and and $\gt$ small enough that $\gt^q \le t_0$, 
    $$\lpr{1+\frac{1}{1-\mu(A_\gt)}\lpr{\mu(A_\gt)+\frac{2L\gt^{1-q}}{1-L\gt^{1-q}}}}\mu(A_\gt) \ge \mu(B_\gt) - \frac{1}{1-\mu(A_\gt)} \lpr{2M_PK\sqrt{d}\gt^{1+q(P-1)/P}}.$$
\end{theorem}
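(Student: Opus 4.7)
The plan is to use a non-negative $\eps$-p.g.s.\ $f$ of $A_\gt$, normalized to $\norm{f}_{\ell^2}=1$, to construct via multilinear interpolation a test function $g\in H^1(\R^d)$ for $B_\gt$, and bound $\cR_{B_\gt}(g)$ in terms of $\cR_{A_\gt}(f)\le\mu(A_\gt)+\eps$. I define $g:\R^d\to\R_{\ge0}$ to be the unique continuous function that matches $f$ on $\gt\Z^d$ and is affine in each variable separately on each hypercube $\ol{C}(y)$ for $y\in\gt\Z^d$. It then remains to bound each term in the continuous Rayleigh quotient by the corresponding discrete term.

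I apply the Boolean Fourier analysis from Section~\ref{sec:prelims} cube by cube. Reparametrizing each $\ol{C}(y)$ to $[-1,1]^d$ and expanding $g(x)=\sum_S\widehat{F}_y(S)\chi_S(u)$ in the resulting coordinates, the integrals $\int_{-1}^1 u^{2k}\d u=\frac{2}{2k+1}$ make $\norm{g}_{L^2(\ol{C}(y))}^2$ and $\norm{\nabla g}_{L^2(\ol{C}(y))}^2$ into weighted sums of $\widehat{F}_y(S)^2$ with weights $3^{-|S|}$ and $|S|\cdot 3^{1-|S|}$ respectively. Summing over cubes (each lattice point lying in $2^d$ cubes, each edge in $2^{d-1}$) and invoking $\<F_y,LF_y\>=2^{d+1}\sum_S\abs{S}\widehat{F}_y(S)^2$ from \eqref{eq: intro fourier}, I obtain two global estimates: $\gt^2\norm{\nabla g}^2\le\gt^d\<f,Lf\>$ and $\norm{g}^2\ge\gt^d\lpr{1-O\lpr{\mu(A_\gt)+\eps}}$. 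The lower bound on $\norm{g}^2$ follows from a Poincar\'e-type estimate $\mathrm{Var}[F_y]\le\sum_S\abs{S}\widehat{F}_y(S)^2$ on the Boolean hypercube, which controls $\sum_S(1-3^{-|S|})\widehat{F}_y(S)^2\le\sum_S|S|\widehat{F}_y(S)^2$.

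For $\int Vg^2$ I partition the lattice cubes by coercivity: call $\ol{C}(y)$ \emph{regular} if $\min_{k\in C(y)}V(k)\ge\gt^q$ and \emph{singular} otherwise. In regular cubes, Lipschitzness gives $V(x)\le V(k)(1+L\sqrt{d}\,\gt^{1-q})$ for any corner $k\in C(y)$ and $x\in\ol{C}(y)$; combined with $\norm{g}^2_{L^2(\ol{C}(y))}\le\frac{\gt^d}{2^d}\sum_{k\in C(y)}f(k)^2$, this bounds the regular contribution to $\int Vg^2$ by $\lpr{1+L\sqrt{d}\,\gt^{1-q}}\gt^d\sum_kV(k)f(k)^2$. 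In singular cubes, coercivity produces a critical point $m$ of some order $p\in\{2,\dots,P\}$ within $K\gt^{q/p}$ of a corner. The crucial refinement is to use a gradient estimate rather than a value estimate: since $D^\alpha V(m)=0$ for $\abs{\alpha}\le p-1$ and the derivatives up to order $P$ are controlled by $M_P$, Taylor expansion of $\nabla V$ around $m$ yields $\norm{\nabla V(x)}\le O\lpr{M_P\norm{x-m}^{p-1}}$ locally, so for any corner $k\in C(y)$ and $x\in\ol{C}(y)$, $\abs{V(x)-V(k)}\le M_P\lpr{K\gt^{q/p}+\gt\sqrt{d}}^{p-1}\cdot\gt\sqrt{d}=O\lpr{M_PK^{p-1}\sqrt{d}\,\gt^{1+q(p-1)/p}}$, worst at $p=P$. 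Integrating $\abs{V(x)-V(k)}g(x)^2$ over singular cubes and aggregating introduces an additive error of order $M_PK\sqrt{d}\,\gt^{1+q(P-1)/P}\cdot\gt^d$ in the numerator of $\cR_{B_\gt}(g)$.

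Assembling the kinetic, regular-potential, and singular-potential estimates for the numerator and dividing by the lower bound on $\norm{g}^2$, then using $\<f,Lf\>+\sum_kV(k)f(k)^2\le\mu(A_\gt)+\eps$ and sending $\eps\to 0$, yields the stated inequality after elementary rearrangement. The main obstacle is the singular-cube analysis: a direct value bound $V(x)\le O(\gt^q)$ produces only error $O(\gt^q)$, much worse than $\gt^{1+q(P-1)/P}$, so the sharper exponent must be extracted from Taylor-expanding $\nabla V$ (not just $V$) at $m$ and integrating that gradient bound along the cube's diameter $O(\gt\sqrt{d})$; one must also consider each possible order $p\in\{2,\dots,P\}$ separately and take the worst-case $p=P$, while verifying that the aggregate over all singular cubes does not depend on the number of critical points (only their per-cell Taylor data).
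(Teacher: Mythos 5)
Your proposal follows essentially the same path as the paper's proof: the multilinear interpolation of $f$, Boolean Fourier analysis cube by cube to control $\norm{\nabla g}^2$ and $\norm{g}^2$ via $\<f,Lf\>$, a regular/singular decomposition of cubes at threshold $\gt^q$, and the key step of Taylor-expanding the gradient of $V$ around a nearby critical point (supplied by coercivity) to obtain the sharper $\gt^{1+q(P-1)/P}$ exponent on singular cubes. The only differences are cosmetic bookkeeping choices --- you carry a $\sqrt{d}$ factor in the Lipschitz step and a $K^{p-1}$ factor from coercivity, and you assemble the estimate by bounding the numerator and denominator of $\cR_{B_\gt}(g)$ directly rather than through the paper's local measures $\nu_f,\nu_g$ --- none of which changes the structure of the argument.
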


\subsection{Defining the test function}\label{sec:def test fn}

First we set up some notation and basic facts. For $y\in\gt\Z^d$, $C(y)$ identifies naturally with the vertices of the hypercube $\{-1,1\}^d$, and we let $E(C(y))$ be the edge set of the corresponding graph. 

Let $f$ be a positive bounded function on $\gt \mathbb{Z}^d$. Let $g$ be the piecewise multilinear extension of $f$, defined on $\R ^d$ as 
\begin{equation}
g(x_1, \dots, x_d) := \E\limits_{x_1, \dots, x_d}\lbr{f(k_{x_1}\theta, \dots, k_{x_d}\theta)}\label{eq:extension def}
\end{equation}
where each $k_{x_i}$ is independent with the following distribution for $k_i\theta \le x_i \le (k_i+1)\theta$: \begin{align*}
    \bP(k_{x_i} = k_i) &= \frac{(k_i+1)\theta - x_i}{\theta}, \\
    \bP(k_{x_i} = k_i+1) &= \frac{x_i - k_i\theta}{\theta}  = 1 - \bP(k_{x_i} = k_i).
\end{align*}

The function $g$ obtained in this way is continuous on each closed hypercube, and is continuous on all of $\R^d$ since $g$ is defined uniquely on the intersection of any two adjacent cubes: if $x_i=k_i\gt$ then $\bP(k_{x_i}=k_i-1)=\bP(k_{x_i}=k_i+1)=0$ and so the expectation is taken over the same set of lattice points, namely those forming the codimension 1 hypercube of the intersection. 

It will be convenient to express $g$ in terms of the Boolean Fourier coefficients of $f$ on each hypercube $C(y)$. To that end, given $y\in \theta \Z^d$, let $v_y$ be a vector such that $x \mapsto \frac{\gt}{2}x - v_y$ sends $C(y) \to \{-1, 1\}^d$ and $y \mapsto \mathbf{-1}$. Define the following functions: \begin{align*}
    F(x) := f\lpr{\frac{\gt}{2}x}, \text{ a dilation of $f$}, && \text{and} && F_y(x) := f\lpr{\frac{\gt}{2}x - v_y}, \text{ a dilation and translation of $f$.}
\end{align*}
Note that $F$ is independent of $y$, and $F_y$ is simply a shifted version of $F$. We will only ever consider $F_y(x)$ for $x \in \{-1, 1\}^d$, so we restrict it to this domain. As introduced in \secref{sec:prelims} we can write $F_y$ as
$$F_y(x) = \sum\limits_{S \subseteq [d]} \widehat{F}_y(S) \rchi_S(x).$$

Similarly, we define $G(x) := g\lpr{\frac{\gt}{2}x}$ and $G_y := g\lpr{\frac{\gt}{2}x - v_y}$. Since there is a unique multilinear function agreeing with $F_y$ on $\{-1,1\}^d$, we have the expansion $$G_y(x) = \sum\limits_{S \subseteq [d]} \widehat{F}_y(S) \rchi_S(x)$$ for $x \in [-1, 1]^d$.

 It will be convenient to relate $\E\lbr{G_y(x)^2}$ (where the expectation is taken over $x$ drawn uniformly at random from $[-1,1]^d$) and $\<G, -\Delta G\>$ to $\lcr{\widehat{F}_y(S)}$. This simply amounts to integrating the characters with respect to the Lebesgue measure and differentiating. For any $\rchi_S$, we have
$$\int_{[-1,1]^d} (\rchi_S(x))^2 \d x = \int_{[-1, 1]^d} \prod_{i \in S} x_i^2 \d x = 2^{d - \abs{S}} \prod_{i \in S} \int_{-1}^1 x_i^2 \d x_i = 2^{d - \abs{S}} \lpr{\frac{2}{3}}^{\abs{S}} = 2^d \lpr{\frac{1}{3}}^{\abs{S}}.$$
We then have for every $y\in \theta \Z^d$:
\begin{align}
    \E \lbr{G_y(x)^2} &= \sum\limits_{S \subseteq [d]} \lpr{\frac{1}{3}}^{\abs{S}}\widehat{F_y}(S)^2, && \text{and} \label{eq: intro fourier cont 1} \\ \E \lbr{\lpr{\frac{\partial}{\partial x_i} G_y(x)}^2} &= \sum\limits_{S:  i \in S} \lpr{\frac{1}{3}}^{\abs{S}-1}\widehat{F_y}(S)^2 && \text{so} && \<G_y, -\Delta G_y \> = 2^{d} \sum\limits_{S \subseteq [d]} \lpr{\frac{1}{3}}^{\abs{S}-1}\abs{S}\widehat{F_y}(S)^2 \label{eq: intro fourier cont 2}.
\end{align}

To prove \thmref{thm:viataylor}, we will analyze the contributions from the Laplacian terms and the potential terms to the Rayleigh quotient separately. 
\subsection{Laplacian term analysis}
We want to find expressions for $\norm{f}^2, \norm{g}^2, \<f, Lf\>,$ and $\<g, -\Delta g\>$ with respect to $\widehat{F}_y$ for $y\in\gt\Z^d$. Since we won't be looking at a single cube in these settings, we define 
$$m_k = \sum\limits_{S: \abs{S} = k} \sum\limits_{y\in\Z^d} \widehat{F}_y(S)^2.$$
\begin{proposition}\label{prop:fourierlap}
    For $f, g$ as above,
    \begin{align}
    \norm{f}^2 = \sum\limits_{k=0}^d m_k, && \<f, Lf\> = 4 \sum_{k=0}^d km_k, && \norm{g}^2 = \gt^d \sum_{k=0}^d \lpr{\frac{1}{3}}^k m_k, && \<g, -\Delta g\> = 4 \gt^{d-2} \sum_{k=0}^d \lpr{\frac{1}{3}}^{k-1}km_k.\label{eq:fourier tech results}
\end{align}
\end{proposition}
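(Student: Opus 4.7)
The plan is to prove each of the four identities in \eqref{eq:fourier tech results} by decomposing the quantity into a sum of per-cube contributions indexed by $y\in\gt\Z^d$, applying the Boolean or multilinear Fourier identities already developed on the hypercube to each cube, and then summing over $y$ with careful attention to how many cubes each lattice vertex or edge belongs to.

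For the discrete identities, restrict attention to a single cube $C(y)\cong\{-1,1\}^d$ and apply \eqref{eq: intro fourier} to $F_y$ to obtain expressions for $\sum_{z\in C(y)}f(z)^2$ and $\sum_{(z,z')\in E(C(y))}(f(z)-f(z'))^2$ in terms of $\widehat{F}_y(S)^2$; the latter uses the standard identity $\<F,LF\>=\sum_{(x,x')\in E}(F(x)-F(x'))^2$ for the unnormalized graph Laplacian. Summing over $y$ requires the combinatorial fact that each vertex $z\in\gt\Z^d$ lies in exactly $2^d$ cubes $C(y)$ (one per choice of which corner $z$ plays) while each lattice edge in direction $e_i$ lies in exactly $2^{d-1}$ cubes (since $y$ is free in the $d-1$ perpendicular coordinates but pinned in the $i$-th). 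Dividing the per-$y$ sums by these multiplicities collapses them into $\norm{f}^2$ and $\<f,Lf\>$ respectively, yielding $\sum_k m_k$ and $4\sum_k km_k$.

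For the continuous identities, partition $\R^d=\bigsqcup_{y\in\gt\Z^d}\ol{C}(y)$ up to a measure-zero boundary set, and on each closed cube change variables via $x=\frac{\gt}{2}u-v_y$ to transport the integral to $[-1,1]^d$. This substitution contributes a Jacobian of $(\gt/2)^d$, and for the gradient computation an additional factor of $(2/\gt)^2$ appears from the chain rule, producing the distinct scalings $\gt^d$ and $4\gt^{d-2}$. On $[-1,1]^d$ the pulled-back quantities are computed by \eqref{eq: intro fourier cont 1}--\eqref{eq: intro fourier cont 2}, which themselves follow from orthogonality of the characters $\rchi_S$ under Lebesgue measure (including the computation $\int_{[-1,1]^d}\rchi_S^2\,\d x=2^d(1/3)^{\abs{S}}$ already carried out just before the statement); summing the resulting per-$y$ identities over $y\in\gt\Z^d$ yields $\norm{g}^2$ and $\<g,-\Delta g\>$ in the claimed form.

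The principal obstacle is simply bookkeeping: keeping straight the vertex-in-$2^d$-cubes versus edge-in-$2^{d-1}$-cubes multiplicity in the discrete case, and the two different powers of $\gt/2$ generated by the Jacobian (with versus without the chain-rule factor from $\nabla$) in the continuous case. All remaining steps are direct consequences of the Fourier identities gathered in the preliminaries and in \secref{sec:def test fn}, and no further analytic input is needed.
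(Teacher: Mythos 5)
Your proposal is correct and follows essentially the same route as the paper: apply the per-cube Boolean/multilinear Fourier identities from \eqref{eq: intro fourier}--\eqref{eq: intro fourier cont 2}, sum over $y\in\gt\Z^d$ using the vertex multiplicity $2^d$ and edge multiplicity $2^{d-1}$ in the discrete case (or the essentially disjoint tiling of $\R^d$ by the cubes $\ol{C}(y)$ in the continuous case), and track the Jacobian factor $(\gt/2)^d$ together with the extra $(2/\gt)^2$ from the chain rule in the gradient term. This is exactly the bookkeeping the paper carries out.
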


\begin{proof} Recall from \eqref{eq: intro fourier} that for every $y\in \theta \Z^d$,
\begin{align*}
    \E \lbr{F_y(x)^2} = \sum\limits_{S \subseteq [d]} \widehat{F}_y(S)^2 && \text{and} && \<F_y, LF_y \> = 2^{d+1} \sum\limits_{S \subseteq [d]} \abs{S}\widehat{F}_y(S)^2.
\end{align*}
We relate the norm and Laplacian terms on a single cube to those on the whole space:
\begin{align*}
    \norm{f}^2 = \frac{1}{2^d} \sum_y \norm{F_y}^2 && \text{and} && \<f, Lf \> = \frac{1}{2^{d-1}} \sum_y \<F_y, LF_y\>.
\end{align*}
Combining these we have
\begin{align*}
    \norm{f}^2 = \sum\limits_{k=0}^d m_k && \text{and} && \<f, Lf\> = 4 \sum_{k=0}^d km_k,
\end{align*}
as desired. Recall from \eqref{eq: intro fourier cont 1} and \eqref{eq: intro fourier cont 2} that
\begin{align*}
    \E \lbr{G_y(x)^2} = \sum\limits_{S \subseteq [d]} \lpr{\frac{1}{3}}^{\abs{S}}\widehat{F}_y(S)^2 && \text{and} && \E \lbr{\lpr{\frac{\partial}{\partial x_i} G(x)}^2} = \sum\limits_{S:  i \in S} \lpr{\frac{1}{3}}^{\abs{S}-1}\widehat{F}(S)^2.
\end{align*}
Taking into account the dilation and the change of variables, we see that
\begin{align*}
    \norm{g}^2 = \lpr{\frac{\gt}{2}}^d \sum_y \norm{G_y}^2 && \text{and} && \<g, -\Delta g \> = \frac{2^2}{\gt^2} \lpr{\frac{\gt}{2}}^d \<G, -\Delta G\>.
\end{align*}
Combining these we have
\begin{align*}
    \norm{g}^2 = \gt^d \sum_{k=0}^d \lpr{\frac{1}{3}}^k m_k && \text{and} && \<g, -\Delta g\> = 4 \gt^{d-2} \sum_{k=0}^d \lpr{\frac{1}{3}}^{k-1}km_k,
\end{align*}
as desired. 
\end{proof}

Given \eqref{eq:fourier tech results}, we now prove the following.
\begin{lemma}\label{lem:interp lap ub}
    Let $f$ be an $\eps$-pseudo-ground-state for $A_\gt$ and let $g$ be its multilinear extension. Then $$ \frac{\cR _{-\gt^2\Delta}(g)}{\cR _{L}(f)} \le \frac{1}{1-\mu(A_\gt)-\eps}.$$
\end{lemma}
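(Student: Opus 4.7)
The plan is to reduce everything to Fourier-analytic quantities and then do a straightforward algebraic comparison, using the hypothesis that $f$ is an $\eps$-p.g.s.\ for $A_\gt$ only at the very end.

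First I would apply \propref{prop:fourierlap} to rewrite both Rayleigh quotients in terms of the moments $m_k=\sum_{|S|=k,\,y\in\Z^d}\widehat F_y(S)^2$. Setting $\alpha:=\sum_k m_k$, $\beta:=\sum_k km_k$, $\gamma:=\sum_k(1/3)^k m_k$, $\delta:=\sum_k(1/3)^{k-1}km_k$, the expressions in \eqref{eq:fourier tech results} give
\[
\cR_L(f)=\frac{4\beta}{\alpha},\qquad \cR_{-\gt^2\Delta}(g)=\frac{4\delta}{\gamma},
\]
so the ratio is $(\delta\alpha)/(\gamma\beta)$. I expect $(\delta\alpha)/(\gamma\beta)$ to be close to $1$, since $\delta$ and $\beta$ differ only by the factors $(1/3)^{k-1}\le 1$, and similarly for $\gamma$ and $\alpha$; the task is to control the discrepancy by $\beta/\alpha$.

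Next I would bound numerator and denominator termwise. For the numerator, $\delta\le\beta$ immediately from $(1/3)^{k-1}\le 1$ for $k\ge 1$. For the denominator, I would show the elementary inequality
\[
\gamma\;\ge\;\alpha-\beta,
\]
by rewriting as $\sum_{k=1}^d\bigl(k-1+(1/3)^k\bigr)m_k\ge 0$, which holds since every coefficient is non-negative (for $k=1$ it equals $1/3$, for $k\ge 2$ it is at least $1$). Combining these gives $\delta/\gamma\le\beta/(\alpha-\beta)$, hence
\[
\frac{\cR_{-\gt^2\Delta}(g)}{\cR_L(f)}\;=\;\frac{\delta\alpha}{\gamma\beta}\;\le\;\frac{\alpha}{\alpha-\beta}\;=\;\frac{1}{1-\beta/\alpha}.
\]

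Finally I would invoke the hypothesis that $f$ is an $\eps$-p.g.s.: since $V\ge 0$,
\[
\tfrac{4\beta}{\alpha}=\cR_L(f)\;\le\;\cR_{A_\gt}(f)\;\le\;\mu(A_\gt)+\eps,
\]
so $\beta/\alpha\le(\mu(A_\gt)+\eps)/4\le\mu(A_\gt)+\eps$, and therefore $1-\beta/\alpha\ge 1-\mu(A_\gt)-\eps$, yielding the claimed bound. The only step with any real content is the inequality $\gamma\ge\alpha-\beta$; the rest is bookkeeping. Note that no appeal to $V$ is needed beyond its non-negativity, which is why this lemma isolates the Laplacian contribution cleanly from the potential contribution that will be handled separately in the next subsection.
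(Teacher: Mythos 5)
Your proof is correct and follows essentially the same route as the paper: rewrite both Rayleigh quotients via the Fourier moments $m_k$ from \propref{prop:fourierlap}, use $(1/3)^{k-1}\le 1$ to get $\delta\le\beta$, lower-bound the denominator $\gamma$ (the paper uses $\gamma\ge m_0\ge 1-\mu(A_\gt)-\eps$, whereas you use $\gamma\ge\alpha-\beta$, which is a slightly weaker but equivalent-in-effect bound since $m_0\ge\alpha-\beta$), and finally invoke the p.g.s.\ hypothesis and $V\ge 0$ to bound $\beta/\alpha$. Both arguments discard the same factor of $4$ at the end, and both implicitly use $\mu(A_\gt)+\eps<1$ (needed in your case so that $\alpha-\beta>0$ and the division is legitimate; needed in the paper's version so the final bound is meaningful), so this is not a discrepancy worth flagging.
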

\begin{proof}
    In the following, keep in mind that $m_k$ is dependent on $\gt$. Assume without loss of generality that $\norm{f}^2 = \sum\limits_{k=0}^d m_k = 1$. From \eqref{eq:fourier tech results} we have \begin{align*}
    &\cR_{-\gt^2\Delta}(g) = -\gt^2\frac{\<g, \Delta g\>}{\norm{g}^2} = \frac{4 \sum_{k=0}^d \lpr{\frac{1}{3}}^{k-1}km_k}{\sum_{k=0}^d \lpr{\frac{1}{3}}^k  m_k} & \text{and} &
    &4\sum\limits_{k=0}^d \lpr{\frac{1}{3}}^{k-1} km_k \le 4\sum\limits_{k=0}^d km_k = \<f, Lf\> 
    \end{align*}
    so that
    \begin{align*}
    &\cR_{-\gt^2\Delta}(g) \le \frac{\<f, Lf\>}{\sum_{k=0}^d \lpr{\frac{1}{3}}^k  m_k} = \frac{\cR _{L}(f)}{\sum_{k=0}^d \lpr{\frac{1}{3}}^k  m_k}.
    \end{align*}
    We also note that if $d = 1$, this becomes an equality.

    Now we want to lower bound $\sum\limits_{k=0}^d \lpr{\frac{1}{3}}^k  m_k$. It will suffice to just lower bound $m_0$. Since $\sum\limits_{k=0}^d m_k = 1$, we can view $\<f, Lf\>$ as a weighted average of elements of $[d]$ (times 4). Since $f$ is an $\eps$-p.g.s., we know $\cR _{A_\gt}(f) \le \mu(A_\gt) + \eps$. From this and \eqref{eq:fourier tech results} we must have $m_0 \ge 1-(\mu(A_\gt)+\eps) = 1-\mu(A_\gt)-\eps$, so
    $$\sum\limits_{k=0}^d \lpr{\frac{1}{3}}^{k}m_k = m_0 + \frac{1}{3}m_1 + \dots \ge 1 - \mu(A_\gt)-\eps. $$

    Altogether we have 
    $$\frac{\cR _{-\gt^2\Delta}(g)}{\cR _{L}(f)} \le \frac{1}{\sum_{k=0}^d \lpr{\frac{1}{3}}^k m_k} \le \frac{1}{1-\mu(A_\gt) - \eps}$$
    as desired. 
\end{proof}

\subsection{Potential term analysis}
Now all that remains is to analyze the potential terms. We will analyze the Rayleigh quotient of the potential terms restricted to each $C(y)$ (or $\ol{C}(y)$ for the continuous case). We define the following quantities:
\begin{align*}
    \norm{g}^2_{\ol{C}(y)} &:= \int_{\ol{C}(y)} g(x)^2 \d x, & \<g, Vg\>_{\ol{C}(y)} &:= \int_{\ol{C}(y)} V(x) g(x)^2 \d x, \\
    \norm{f}^2_{C(y)} &:= \sum\limits_{x \in C(y)} f(x)^2, & \<f, Vf\>_{C(y)} &:= \sum_{x \in C(y)}V(x)f(x)^2.
\end{align*}

If we do a ``cube by cube'' analysis, we then need some way to relate this to the whole Rayleigh quotient. Observe that we can relate the ``local'' Rayleigh quotients to the ``global'' Rayleigh quotients via 
\begin{align}
    \frac{\< g, Vg \>}{\norm{g}^2} &= \sum_{y\in\gt\Z^d}\frac{\norm{g}^2_{\ol C(y)}}{\norm{g}^2} \frac{\<g, Vg \>_{\ol C(y)}}{\norm{g}^2_{\ol C(y)}} & \text{ and } &&
    \frac{\< f, V f \>}{\norm{f}^2} &= \sum_{y\in\gt\Z^d} \frac{\norm{f}^2_{C(y)}}{2^d\norm{f}^2} \frac{\<f, V f \>_{C(y)}}{\norm{f}^2_{C(y)}}\label{eq:pot rays}
\end{align} 
where the $2^d$ factor corrects for overcounting, since each vertex $x$ is contained in $2^d$ cubes. It is natural to define the following probability measures on $\gt\mathbb{Z}^d$:
\begin{align}
    \nu_g(y) &:= \frac{\norm{g}^2_{\ol C(y)}}{\norm{g}^2} & 
    \text{ and } && \nu_f(y) &:= \frac{\norm{f}^2_{C(y)}}{2^d\norm{f}^2} = \frac{\E\limits_{k\in C(y)}f(k)^2}{\norm{f}^2}.\label{eq:nu defs}
\end{align}

\begin{lemma}\label{clm:m meas rel}
    For $f$ and $g$ as above, for all $y\in\gt\Z^d$, 
    $$\nu_g(y)\le\frac{1}{1-\mu(A_\gt)-\eps}\nu_f(y).$$
\end{lemma}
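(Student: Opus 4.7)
The plan is to factor the desired inequality into a purely local estimate on a single cube, $\|g\|^2_{\ol C(y)} \le (\gt^d/2^d)\|f\|^2_{C(y)}$, combined with a global lower bound $\|g\|^2 \ge \gt^d(1-\mu(A_\gt)-\eps)\|f\|^2$. Unpacking the definitions of $\nu_g$ and $\nu_f$ in \eqref{eq:nu defs},
\[
\frac{\nu_g(y)}{\nu_f(y)} \;=\; \frac{2^d\,\|g\|^2_{\ol C(y)}}{\|f\|^2_{C(y)}}\cdot\frac{\|f\|^2}{\|g\|^2},
\]
so these two estimates immediately yield the claim. Both pieces reduce to Boolean Fourier calculations of the type already carried out in \propref{prop:fourierlap} and \lemref{lem:interp lap ub}, which is encouraging since the machinery is in place.

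For the local estimate, I would work on the cube $\ol C(y)$ via the shifted/dilated functions $F_y$ and $G_y$ of \secref{sec:def test fn}. Changing variables, $\|g\|^2_{\ol C(y)} = \gt^d\,\E[G_y(x)^2]$ with $x$ uniform on $[-1,1]^d$, while $\|f\|^2_{C(y)} = 2^d\,\E[F_y(x)^2]$ with $x$ uniform on $\{-1,1\}^d$. Applying \eqref{eq: intro fourier} and \eqref{eq: intro fourier cont 1} gives
\[
\|g\|^2_{\ol C(y)} = \gt^d\sum_{S\subseteq[d]}\lpr{\tfrac{1}{3}}^{|S|}\widehat{F}_y(S)^2, \qquad \tfrac{1}{2^d}\|f\|^2_{C(y)} = \sum_{S\subseteq[d]}\widehat{F}_y(S)^2,
\]
so the local inequality is the termwise bound $(1/3)^{|S|} \le 1$. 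This is the easy, mechanical part.

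For the global bound, the proof of \lemref{lem:interp lap ub} already establishes (after normalizing $\|f\|^2=\sum_k m_k = 1$) that $m_0 \ge 1-\mu(A_\gt)-\eps$, and hence from \propref{prop:fourierlap},
\[
\|g\|^2 = \gt^d\sum_{k=0}^d \lpr{\tfrac{1}{3}}^k m_k \;\ge\; \gt^d\, m_0 \;\ge\; \gt^d(1-\mu(A_\gt)-\eps)\|f\|^2.
\]
Combining with the local inequality gives $\nu_g(y)/\nu_f(y) \le \gt^d/(\gt^d(1-\mu(A_\gt)-\eps)) = 1/(1-\mu(A_\gt)-\eps)$.

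I do not expect any genuine obstacle here: the lemma is essentially a bookkeeping corollary of the Fourier identities already assembled, together with the $m_0$ lower bound extracted from the hypothesis that $f$ is an $\eps$-pseudo-ground-state. The only care needed is in making the normalization $\|f\|^2=1$ explicit (since the inequality is homogeneous in $f$, this is harmless) and in noting that the bound $\|g\|^2 \ge \gt^d m_0$ uses only the $k=0$ term, throwing away the higher-frequency contributions — a lossy but sufficient step that mirrors the one already used in \lemref{lem:interp lap ub}.
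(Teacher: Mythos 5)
Your proof is correct and follows essentially the same route as the paper's: both arguments boil down to the termwise bound $(1/3)^{|S|}\le 1$ on the cube $\ol C(y)$ together with the lower bound $\sum_k(1/3)^k m_k\ge m_0\ge 1-\mu(A_\gt)-\eps$ imported from \lemref{lem:interp lap ub}. The paper writes $\nu_g(y)$ and $\nu_f(y)$ directly in Fourier coordinates and compares them, whereas you factor the ratio $\nu_g(y)/\nu_f(y)$ into a local piece and a global piece before applying the same two estimates; this is a cosmetic difference in bookkeeping, not a different argument.
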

\begin{proof}
    Assume $\norm{f}^2 = \sum\limits_{k=0}^d m_k = 1$. Recall from \eqref{eq: intro fourier} and \eqref{eq:fourier tech results} that 
    \begin{align*}
        \norm{g}^2 = \gt^d \sum_{k=0}^d \lpr{\frac{1}{3}}^k m_k && \text{and} && \norm{g}^2_{\ol{C}(y)} = \gt^d \sum\limits_{S \subseteq [d]} \lpr{\frac{1}{3}}^{\abs{S}}\widehat{F}_y(S)^2.
    \end{align*}
    Then we have $$\nu_g(y) = \frac{\sum_{S \subseteq [d]} \lpr{\frac{1}{3}}^{\abs{S}}\widehat{F}_y(S)^2}{\sum_{k=0}^d \lpr{\frac{1}{3}}^k m_k}.$$  
    
    From \lemref{lem:interp lap ub} we must have $m_0 \ge 1-\mu(A_\gt)-\eps$. We see $\sum_{k=0}^d \lpr{\frac{1}{3}}^k m_k \ge m_0 \ge 1 - \mu(A_\gt)-\eps$, so $$\nu_g(y) \le \frac{1}{1-\mu(A_\gt)-\eps} \lpr{\sum\limits_{S \subseteq [d]} \lpr{\frac{1}{3}}^{\abs{S}}\widehat{F}_y(S)^2}.$$
    
    From \eqref{eq: intro fourier}, we can write $\nu_f(y)$ as 
    $$\nu_f(y) = \sum\limits_{S \subseteq [d]} \widehat{F}_y(S)^2.$$
    Clearly $\sum\limits_{S \subseteq [d]} \lpr{\frac{1}{3}}^{\abs{S}}\widehat{F}_y(S)^2 \le \sum\limits_{S \subseteq [d]} \widehat{F}_y(S)^2$, so the claim is shown.
\end{proof}

Now we arrive at the main estimate. 

\begin{lemma}\label{lem:viataylor}
    Let $V:\mathbb{R^d} \rightarrow \mathbb{R}_{\ge 0}$ be smooth, $\mathbb{Z}^d$-periodic and $(t_0, K, P)$-coercive. Let $L$ be the Lipschitz constant of $V$ and let $\frac{D_v^kD_uV}{k!} \le M_P$ for any unit vectors $u, v,$ and $k \le P-1$. For any $q \in (0,1)$ and $\gt$ small enough that $\gt^q \le t_0$,
    $$\abs{\cR_{V}(g)-\cR_{V}(f)} \le \frac{1}{1-\mu(A_\gt)-\eps} \lpr{2M_PK\sqrt{d}\gt^{1+q(P-1)/P} + \frac{2L\gt^{1-q}}{1-L\gt^{1-q}}\cR_{V}(f)}.$$
\end{lemma}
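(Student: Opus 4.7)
The strategy is to reduce the comparison of $\cR_V(g)$ and $\cR_V(f)$ to cubewise estimates via the decomposition \eqref{eq:pot rays}. Writing $r_g(y):=\<g,Vg\>_{\ol C(y)}/\norm{g}^2_{\ol C(y)}$ and $r_f(y):=\<f,Vf\>_{C(y)}/\norm{f}^2_{C(y)}$, we have $\cR_V(g)=\sum_y\nu_g(y)r_g(y)$ and $\cR_V(f)=\sum_y\nu_f(y)r_f(y)$, and \lemref{clm:m meas rel} gives the pointwise domination $\nu_g(y)\le\nu_f(y)/(1-\mu(A_\gt)-\eps)$. It therefore suffices to estimate $|r_g(y)-r_f(y)|$ cubewise, using that both local Rayleigh quotients lie in $[\min_{\ol C(y)}V,\max_{\ol C(y)}V]$.

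I partition $\gt\Z^d$ by potential size: let $Y_H:=\{y:\min_{\ol C(y)}V\ge\gt^q\}$ and $Y_L:=\gt\Z^d\setminus Y_H$, with the threshold $\gt^q$ tuned to interpolate between two estimation regimes. On $y\in Y_H$, Lipschitz continuity gives $\max V-\min V\le L\sqrt{d}\gt$ on $\ol C(y)$, and combined with $\min V\ge\gt^q$ this yields $\max V\le\min V/(1-L\sqrt{d}\gt^{1-q})$; hence $|r_g(y)-r_f(y)|$ is bounded by $\frac{L\sqrt{d}\gt^{1-q}}{1-L\sqrt{d}\gt^{1-q}}r_f(y)$. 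On $y\in Y_L$, pick $z\in\ol C(y)$ with $V(z)<\gt^q\le t_0$; by $P$-coercivity there is a critical point $m$ of order $p\in\{2,\ldots,P\}$ with $\norm{z-m}\le K\gt^{q/p}$, so every $x\in\ol C(y)$ satisfies $\norm{x-m}\le\sqrt{d}\gt+K\gt^{q/p}\le 2K\gt^{q/p}$ for $\gt$ small (using $q/p<1$). The vanishing of $D^\alpha V(m)$ for $\norm{\alpha}_1\le p-1$ transfers to $\nabla V$ through order $p-2$, so Taylor's theorem applied to each $\partial_iV$ at $m$ with the mixed-derivative bound $M_P$ gives $\norm{\nabla V(x)}\le CM_P\gt^{q(p-1)/p}$ uniformly on $\ol C(y)$; integrating along segments bounds $\max V-\min V$, hence $|r_g(y)-r_f(y)|$, by $2M_PK\sqrt{d}\gt^{1+q(p-1)/p}$.

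To conclude, I would write $\cR_V(g)-\cR_V(f)=\sum_y\nu_g(y)(r_g(y)-r_f(y))+\sum_y(\nu_g(y)-\nu_f(y))r_f(y)$, bound the first sum by splitting over $Y_H$ and $Y_L$ and applying the two cubewise estimates, use \lemref{clm:m meas rel} to transfer $\nu_g$-weights to $\nu_f$-weights (picking up the $1/(1-\mu(A_\gt)-\eps)$ prefactor) in the multiplicative term, and argue symmetrically for the reverse inequality; this produces both the multiplicative $L\gt^{1-q}/(1-L\gt^{1-q})$ factor on $\cR_V(f)$ and the additive $M_PK\sqrt{d}\gt^{1+q(P-1)/P}$ term in the claim. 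The main obstacle is matching the stated exponent for the low-cube term: my Taylor estimate produces $\gt^{1+q(p-1)/p}$, where the critical order $p$ can be as small as $2$, giving $\gt^{1+q/2}$ in the worst case -- which agrees with the lemma only for $P=2$ (as in the almost-Mathieu example). Recovering the stated uniform exponent across $p\in\{2,\ldots,P\}$ appears to require either a more refined Taylor expansion on $V$ itself (exploiting the full mixed-derivative hypothesis across orders $p$ through $P$) or an additional assumption of uniform $P$-coercivity.
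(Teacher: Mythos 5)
Your proof follows the paper's argument almost exactly: the same reduction to local Rayleigh quotients via \eqref{eq:pot rays}, the same case split on whether $V$ exceeds $\gt^q$ on the cube, the same Lipschitz bound in the high-potential case, the same coercivity-plus-Taylor estimate on $\nabla V$ near a critical point in the low-potential case, and the same transfer of $\nu_g$-weights to $\nu_f$-weights via Lemma~\ref{clm:m meas rel}.

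You have also correctly identified a genuine slip in the paper's own proof. The final step of \eqref{eq:localpotbound} asserts $\gt^{1+q(p-1)/p} \le \gt^{1+q(P-1)/P}$, but since $(p-1)/p = 1 - 1/p$ is increasing in $p$ and $\gt < 1$, one in fact has $\gt^{1+q(p-1)/p} \ge \gt^{1+q(P-1)/P}$ for $2 \le p \le P$, with the worst case at $p = 2$. What the argument actually proves is a bound with exponent $1 + q/2$, not $1 + q(P-1)/P$. As you note, the two agree only when $P = 2$ (the almost-Mathieu example), and the stated exponent is recovered in general only under the stronger hypothesis of uniform $P$-coercivity, which forces $p = P$ at every critical point. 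So your concluding remark is exactly right: either the exponent in the lemma should be $1 + q/2$, or the coercivity hypothesis should be upgraded to uniform $P$-coercivity.

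Two very minor points of bookkeeping, neither of which affects the conclusion: your Case 1 estimate carries an extra $\sqrt{d}$ factor from the cube diameter that the paper's Case 1 silently drops (your version is the more careful one in $d > 1$), and the constant in front of $\gt^{q(p-1)/p}$ should strictly be $K^{p-1}$ (or $(2K)^{p-1}$ with your slack on $\norm{x-m}$) rather than $K$; the paper has the same slip, and one can absorb it into $M_P$.
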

\begin{proof}

\noindent Suppose $y\in \theta \Z^d$. Let $q \in (0,1)$ and $\gt$ small enough that $\gt^q \le t_0$.

\smallskip

\noindent {\em Case 1.} $V(y)\ge\gt^q$. Then, for $x\in\ol{C}(y)$, 
$$1-L\gt^{1-q}\le\frac{V(x)}{V(y)}\le1+L\gt^{1-q},$$
so that we have
\begin{align*}
    \cR_{V,\ol{C}(y)}(g)&\le V(y)\lpr{1+L\gt^{1-q}},\\
    \cR_{V,C(y)}(f)&\ge V(y)\lpr{1-L\gt^{1-q}},\\
    \frac{\cR_{V,\ol{C}(y)}(g)}{\cR_{V,C(y)}(f)}&\le\frac{1+L\gt^{1-q}}{1-L\gt^{1-q}},\\
    \abs{\cR_{V,\ol{C}(y)}(g)-\cR_{V,C(y)}(f)}&\le\frac{2L\gt^{1-q}}{1-L\gt^{1-q}}\cR_{V,C(y)}(f).
\end{align*}

\noindent {\em Case 2.} $V(y)\le \theta^q$. 

Consider the difference $\abs{\cR_{V,\ol{C}(y)}(g)-\cR_{V,C(y)}(f)}$. Being the difference of two random variables supported in $\ol{C}(y)$, it is at most 
$$\sup\limits_{x\in\ol{C}(y)}V(x)-\inf\limits_{x\in\ol{C}(y)}V(x)\le2\sup\limits_{x\in\ol{C}(y)}\abs{V(x)-V(y)}.$$
By the fundamental theorem of calculus, 
$$\sup_{x \in \ol{C}(y)} \abs{V(x) - V(y)} \le \sqrt{d}\gt \cdot \sup_{\substack{z \in \ol{C}(y)\\u\in \bS^{d-1}}} D_uV(z).$$ 
Let $z$ and $u$ maximize this expression. 
By coercivity, $m$ is a critical point of order $p\le P$. We know $D_{z-m}^kD_uV(m) = 0$ for any $k < p-1$. By Taylor's remainder theorem we can bound 
$$D_uV(z) \le \sup_{\norm{\xi-m}_\infty<\sqrt{d}\gt}D_{z-m}^{p-1}D_uV(\xi) \cdot \frac{\norm{z-m}^{p-1}_2}{(p-1)!}.$$

By coercivity, we know that $\norm{z-m}^{p-1}_2 \le K\gt^{q(p-1)/p}$. Using the uniform bound $M_P$ we have
\begin{equation} \label{eq:localpotbound}
    \max_{x \in \ol{C}(y)} \abs{V(x) - V(y)} \le M_PK\sqrt{d}\gt \cdot \gt^{q(p-1)/p} = M_PK\sqrt{d} \gt^{1+q(p-1)/p} \le M_PK\sqrt{d} \gt^{1+q(P-1)/P}.
\end{equation}

To relate the global Rayleigh quotients we recall \eqref{eq:pot rays}. By \lemref{clm:m meas rel}, $\nu_g\le\frac{1}{1-\mu(A_\gt)-\eps}\nu_f$, so by \eqref{eq:localpotbound} we have in both cases:
\begin{equation} \label{eq:finalpot} \abs{\cR_{V}(g)-\cR_{V}(f)} \le \frac{1}{1-\mu(A_\gt)-\eps} \lpr{2M_PK\sqrt{d}\gt^{1+q(P-1)/P} + \frac{2L\gt^{1-q}}{1-L\gt^{1-q}}\cR_{V}(f)}.\end{equation}

\end{proof}

\subsection{Proof of {\thmref{thm:viataylor}}}

\begin{proof}[Proof of {\thmref{thm:viataylor}}] 

We first verify that the $g$ constructed in \eqref{eq:extension def} is a valid test function for \eqref{eqn:bvar}. Note that $g$ is continuous and equal to a polynomial of degree at most $d$ on each cube $\overline{C}(y)$. Since $f\in L^\infty(\R^d)$, $g$ has all derivatives $\partial_i g$ uniformly bounded on $\R^d=\bigcup\limits_{y\in \theta \Z^d}\mathrm{int}(\overline{C}(y))$. Thus, $g$ is Lipschitz. By e.g.\ \cite[pp.\ 279]{evans2022partial}, we see that $g\in W^{1,\infty}(\R^d)$. By Jensen's inequality we further have $\norm{g}_{\overline{C}(y)}^2\le O\lpr{\norm{f}^2_{C(y)}}$ for every $y$, so $g\in H^1(\R^d)$, as desired.

We now aim to bound the difference between $$\cR_{B_\gt}(g) = \cR_{-\gt^2\Delta}(g) + \cR_{V}(g) \hspace{5mm} \text{and} \hspace{5mm} \cR_{A_\gt}(f) = \cR_{L}(f) + \cR_{V}(f).$$
From \lemref{lem:interp lap ub}, multiplying through and rearranging yields the following relation on the Laplacian terms: 
$$\abs{\cR_{-\gt^2\Delta}(g) - \cR _{L}(f)} \le \frac{\mu(A_\gt)+\eps}{1-\mu(A_\gt)-\eps}\cR_{L}(f). $$

Combining this with \eqref{eq:finalpot} we have \begin{equation}
    \abs{\cR_{B_\gt}(g) - \cR_{A_\gt}(f)} \le \frac{1}{1-\mu(A_\gt)-\eps} \lpr{2M_PK\sqrt{d}\gt^{1+q(P-1)/P} + \lpr{\mu(A_\gt)+\eps+\frac{2L\gt^{1-q}}{1-L\gt^{1-q}}}\cR_{A_\gt}(f)}.\label{eq:add-mult bound}
\end{equation}

Minimizing over all $f$ we have the following bound on $\mu(B_\gt)$:
\begin{equation}
    \mu(B_\gt) \le \mu(A_\gt) + \frac{1}{1-\mu(A_\gt)} \lpr{2M_PK\sqrt{d}\gt^{1+q(P-1)/P} + \lpr{\mu(A_\gt)+\frac{2L\gt^{1-q}}{1-L\gt^{1-q}}}\mu(A_\gt)}.
\end{equation}

\end{proof}
\begin{remark} It is clear from the proof that \thmref{thm:lowerbound} holds for any periodic $V\in C^P$, not necessarily $C^\infty$.\end{remark}
\section{A bound for any potential}\label{sec:lowerany}
We give an upper bound on $\mu(B_\gt)$ with respect to $\mu(A_\gt)$ with few constraints on our potential $V$. The result we prove is the following, a slightly sharper version of \thmref{thm:simple}. 

\begin{theorem}\label{thm:exp bd}
    Let $V: \R ^d \rightarrow \R _{\ge 0}$, not necessarily periodic with $\norm{V}_\infty\le1$, and let $\gt\in(0,1)$. Suppose that there exists $a \in \mathbb{R}$ such that for all $y\in\gt\Z^d$, 
    $$\frac{1}{\gt^d}\int_{\ol{C}(y)}V(x)^2 \d x \le a \cdot \frac{1}{2^d} \sum\limits_{k \in C(y)}V(k)^2.$$
    Then, $$c_d\mu(A_\theta)\ge\mu(B_\theta)$$
    where $c_d = \max \lcr{\frac{3^d}{2^d} + 5 \cdot 3^d, \frac{3^d}{2^d} \frac{a}{r}}$ and $r$ is a dimension-dependent constant between $\frac{1}{4}$ and $1$.
\end{theorem}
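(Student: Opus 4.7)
The plan is to parallel \secref{sec:lowerasymp}: given an $\eps$-pseudo-ground-state $f$ of $A_\gt$ (nonnegative WLOG), form its multilinear extension $g$ on $\R^d$ as in \secref{sec:def test fn}, bound $\cR_{B_\gt}(g)\le c_d\cR_{A_\gt}(f)+O(\eps)$ cube-by-cube, and then infimize over $f$ while sending $\eps\to 0$. Verification that $g\in H^1(\R^d)$ has already been done at the opening of the proof of \thmref{thm:viataylor}. The key difference from \thmref{thm:viataylor} is that we do not assume smoothness of $V$ and do not exploit any p.g.s.-specific tightening of the Parseval identities from \secref{sec:def test fn}; the bounds here should be purely multiplicative and use only the $L^2$ hypothesis \eqref{eqn:adef} together with $\|V\|_\infty\le 1$.

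For the Dirichlet energy, \propref{prop:fourierlap} supplies the exact identity $\cR_{-\gt^2\Delta}(g)=4\sum_k(1/3)^{k-1}km_k/\sum_k(1/3)^km_k$, and an unconditional comparison $\cR_{-\gt^2\Delta}(g)\le C_L\cR_L(f)$ follows from Chebyshev's sum inequality (comparing the $m_k$- and $(1/3)^km_k$-weighted averages of $k$) together with the dimension-dependent lower bound $\sum_k(1/3)^km_k\ge 3^{-d}\sum_km_k$. This is where the $3^d/2^d+5\cdot 3^d$ contribution to $c_d$ arises: the $3^d/2^d$ factor reflects the worst-case ratio $\sum_kf(k)^2/(\gt^{-d}\|g\|^2)$, and the $5\cdot3^d$ arises from the numerator comparison.

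For the potential term I would work cube-by-cube using \eqref{eq:pot rays} and \eqref{eq:nu defs}. Re-running \lemref{clm:m meas rel} without the p.g.s.\ assumption produces an unconditional $\nu_g(y)\le C'\nu_f(y)$. The local estimate is then obtained by applying the pointwise Jensen bound $g(x)^2\le\sum_{k\in C(y)}\lambda_k(x)f(k)^2$, where $\lambda_k$ denotes the multilinear basis function associated with vertex $k$, to get $\int_{\ol C(y)}Vg^2\le\sum_kf(k)^2\int_{\ol C(y)}V\lambda_k$, and then controlling $\int V\lambda_k$ by Cauchy-Schwarz against the hypothesis \eqref{eqn:adef} applied to $V^2$. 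Combined with the Parseval lower bound $\int_{\ol C(y)}g^2\ge(\gt/6)^d\sum_kf(k)^2$ (also from \propref{prop:fourierlap}), this yields $\cR_{V,\ol C(y)}(g)\le C_V\cR_{V,C(y)}(f)$ with $C_V=(3^d/2^d)(a/r)$, where $r\in[1/4,1]$ emerges as the worst-case ratio in the Parseval comparisons of $L^2$-norms to sums at corners.

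Combining the two bounds, $\cR_{B_\gt}(g)\le C_L\cR_L(f)+C_V\cR_V(f)\le\max(C_L,C_V)\cR_{A_\gt}(f)=c_d\cR_{A_\gt}(f)$; taking an infimum over $f$ and sending $\eps\to 0$ finishes the proof. The main obstacle will be the step controlling $\int V\lambda_k$: a naive Cauchy-Schwarz against \eqref{eqn:adef} gives $\int V\lambda_k\le \gt^d\sqrt{a/6^d}\bigl(\sum_{k'}V(k')^2\bigr)^{1/2}$, which after summation against $f(k)^2$ produces $(\sum V^2)^{1/2}\sum f^2$ rather than the cleaner target $\sum V(k)f(k)^2$. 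Bridging this gap likely requires either a direct multilinear expansion $\int Vg^2=\sum_{k,k'}f(k)f(k')\int V\lambda_k\lambda_{k'}$ that exploits nonnegativity of $V$, $f$, and the $\lambda_k$, or a case split depending on whether $\sum_{k\in C(y)}V(k)^2$ is concentrated on corners with large $f(k)$; the dimension-dependent constant $r\in[1/4,1]$ is then precisely what quantifies this worst case.
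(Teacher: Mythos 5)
Your framework (multilinear extension, cube-by-cube comparison of Rayleigh quotients, take an infimum at the end) matches the paper's, and your observation that a naive Cauchy--Schwarz against the hypothesis yields $\bigl(\sum V^2\bigr)^{1/2}\sum f^2$ rather than $\sum V(k)f(k)^2$ correctly identifies where the real work is. But the mechanism you propose for closing that gap is not the right one, and as written your sketch does not constitute a proof.

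The paper's key idea, which your proposal does not reach, is a \emph{dichotomy on the local Laplacian Rayleigh quotient} $\rho(y):=\langle f,Lf\rangle_{C(y)}/\|f\|^2_{C(y)}$, not on where $V^2$ is concentrated. Fix a threshold $\alpha(d)$. On cubes with $\rho(y)\ge\alpha(d)$, one simply uses $\|V\|_\infty\le 1$ to bound the local potential quotient by $1\le\rho(y)/\alpha(d)$; the cost is a \emph{Laplacian} term $\tfrac{1}{\alpha(d)}\langle f,Lf\rangle_{C(y)}$ leaking into the potential estimate. On cubes with $\rho(y)\le\alpha(d)$, the discrete Poincar\'e inequality $\|f-\mu\mathbf{1}\|^2_{C(y)}\le\langle f,Lf\rangle_{C(y)}\le\alpha(d)\|f\|^2_{C(y)}$ forces $f$ to be nearly constant on $C(y)$, so $\min_{k\in C(y)}f(k)^2\ge r\,\max_{k\in C(y)}f(k)^2$ with $r$ an explicit function of $\alpha(d)$ and $d$; this near-constancy is exactly what turns the mixed object $\bigl(\sum V(k)^2\bigr)^{1/2}\sum f(k)^2$ into $\sum V(k)f(k)^2$, since one can pull a single $\max f(k)^2\le r^{-1}\min f(k)^2$ out of the integral $\int_{\overline C(y)}Vg^2$. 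Choosing $\alpha(d)=\tfrac{1}{10\cdot 2^d}$ makes $r\ge\tfrac14$. Your two proposed alternatives do not accomplish this: expanding $\int Vg^2=\sum_{k,k'}f(k)f(k')\int V\lambda_k\lambda_{k'}$ still leaves you needing to relate $\int V\lambda_k\lambda_{k'}$ (which probes $V$'s interior values) to the corner values $V(k)$, and a case split on where $\sum_{k\in C(y)}V(k)^2$ is concentrated does not address the scenario in which all $V(k)$ are small but $V$ is large in the cube's interior (there it is $f$'s near-constancy, not $V$'s distribution, that is needed).

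You also misattribute both constants. The contribution $5\cdot 3^d$ does not arise from any ``numerator comparison'' in the Dirichlet term; the Laplacian comparison is the immediate termwise inequality $\langle g,-\Delta g\rangle\le\gt^{d-2}\langle f,Lf\rangle$ (from $(1/3)^{k-1}k\le k$), with no Chebyshev sum inequality needed, and the $3^d/2^d$ prefactor there comes entirely from the norm lower bound $\|g\|^2\ge(2/3)^d\gt^d\|f\|^2$, whose proof crucially uses positivity of $f$ (to drop the cross term $f(k\gt)f((k{+}1)\gt)\ge 0$); your proposed $(\gt/6)^d$ local bound is the weaker positivity-free Parseval estimate and would give a worse final constant. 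The $5\cdot 3^d$ is the product $\tfrac{3^d}{2^d}\cdot\tfrac{1}{2\alpha(d)}=\tfrac{3^d}{2^d}\cdot 5\cdot 2^d$ coming from Case 1 of the potential lemma. And $r$ quantifies the Poincar\'e near-constancy of $f$ in Case 2, not a worst case for $V^2$ concentration.
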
 

\begin{proof}
Let $f$ be a positive function on $\theta \Z ^d$. We again consider the multilinear extension $g$ as an expectation as defined in \eqref{eq:extension def}. We relate each term in the Rayleigh quotients. The following bound achieves a better constant than \propref{prop:fourierlap}.
\begin{lemma}\label{lem:norm comp}
    The norms of $f$ and $g$ satisfy $\norm{g}^2 \ge \lpr{\frac{2}{3}}^d \gt^d \norm{f}^2$.
\end{lemma}
\begin{proof} We prove this via induction on $d$. For $d = 1$ we have \begin{align*}
        \int_{\R}g(x)^2 \d x &= \sum_{k \in \Z} \int_{k\gt}^{(k+1)\gt} \frac{1}{\gt^2}\lpr{(x-k\gt)f(k\gt) + ((k+1)\gt - x)f((k+1)\gt)}^2 \d x \\
        &\ge \sum_{k \in \Z} \frac{1}{\gt^2} \int_{k\gt}^{(k+1)\gt} (x-k\gt)^2f(k\gt)^2 + ((k+1)\gt-x)^2f((k+1)\gt)^2 \d x \\
        &= \frac{\gt}{3} \sum_{k \in \Z} (f(k\gt)^2 + f((k+1)\gt)^2) \\
        &= \frac{2\gt}{3} \norm{f}^2.
    \end{align*}

    Assume that for any function $f$ defined on $\gt\Z^{d}$, its multilinear extension $g$ satisfies $\norm{g}^2 \ge \lpr{\frac{2}{3}}^{d} \gt^{d} \norm{f}^2.$

    Consider $f$ defined on $\gt\Z^{d+1}$ and its multilinear extension $g$. For any $x_1 \in \R$ with $k\gt \le x_1 \le (k+1)\gt$ and for any $y \in \gt\Z^{d}$, define 
    $$f_{x_1}(y) := \frac{x_1 - k\gt}{\gt}f(k\gt, y) + \frac{(k+1)\gt -x_1}{\gt}f((k+1)\gt, y).$$ Let $g_{x_1}$ 
    be the multilinear extension of $f_{x_1}$ and observe that by definition $g_{x_1}(y) = g(x_1, y)$. Now we evaluate the norm \begin{align*}
        \int_{\R^{d+1}} g(x)^2 \d x &= \int_\R \int_{\R^{d}} g(x_1, y)^2 \d y \d x_1 = \int_\R \int_{\R^{d}} g_{x_1}(y)^2 \d y \d x_1 \\
        &\ge \int_\R \lpr{\frac{2\gt}{3}}^{d} \norm{f_{x_1}}^2 \d x_1 && \text{ind.\ hyp.} \\
        &= \lpr{\frac{2\gt}{3}}^{d} \int_\R \sum_{y \in \gt\Z^{d}} f_{x_1}(y)^2 \d x_1 \\
        &= \lpr{\frac{2\gt}{3}}^{d} \sum_{y \in \gt\Z^{d}} \sum_{k \in \Z} \int_{k\gt}^{(k+1)\gt} \frac{1}{\gt^2} \lpr{(x_1 - k\gt)f(k\gt, y) + ((k+1)\gt - x_1)f((k+1)\gt, y)}^2 \d x_1 && \text{Tonelli} \\
        &\ge \lpr{\frac{2\gt}{3}}^{d} \sum_{y \in \gt\Z^{d}} \sum_{k \in \Z} \frac{1}{\gt^2} \int_{k\gt}^{(k+1)\gt} (x_1 - k\gt)^2f(k\gt, y)^2 + ((k+1)\gt - x_1)^2f((k+1)\gt, y)^2 \d x_1 && \text{positivity} \\
        & = \lpr{\frac{2\gt}{3}}^{d} \cdot \frac{\gt}{3} \sum_{y \in \gt\Z^d}\sum_{k\in\Z} f(k\gt, y)^2 + f((k+1)\gt, y)^2 \\
        & = \lpr{\frac{2\gt}{3}}^{d+1} \norm{f}^2.
    \end{align*}
\end{proof}

\begin{lemma}\label{lem:lap comp}
    The quadratic forms of $f$ and $g$ with the respective Laplacian operators satisfy $$\<g, -\Delta g \> \le \theta^{d-2} \< f, Lf \>.$$
\end{lemma}

\begin{proof}
From \eqref{eq:fourier tech results},
\begin{align*}
    \<f, Lf\> = 4 \sum_{k=0}^d km_k && \text{and} && \<g, -\Delta g\> = 4 \gt^{d-2} \sum_{k=0}^d \lpr{\frac{1}{3}}^{k-1}km_k.
\end{align*}
Immediately we see 
$$\<g, -\Delta g\> \le \gt^{d-2} \<f, Lf\>.$$

\end{proof}

\begin{lemma} \label{lem:pot comp}
    Let $V: \R ^d \rightarrow \R _{\ge 0}$, not necessarily periodic with $\norm{V}_\infty\le1$, and let $\gt\in(0,1)$. Suppose that there exists $a \in \mathbb{R}$ such that for all $y\in\gt\Z^d$, 
    $$\frac{1}{\gt^d}\int_{\ol{C}(y)}V(x)^2 \d x \le a \cdot \frac{1}{2^d} \sum\limits_{k \in C(y)}V(k)^2.$$ The quadratic forms of $f$ and $g$ with the respective potential operators satisfy $$\<g, Vg \> \le \gt^d \lpr{10 \cdot 2^{d-1} \<f, Lf\> + \frac{a}{r} \<f, Vf\>}$$ where $r$ is a $d$-dependent constant between $\frac{1}{4}$ and $1$.
\end{lemma}

\begin{proof}
We will consider cases based on the size of the local Laplacian Rayleigh quotients. Let $\alpha(d)$ be a function of $d$ to be determined later. Let $y\in \theta \Z^d$ and let $\<f, Lf\>_{C(y)} := \sum\limits_{(u, v) \in E(C(y))} \lpr{f(u)-f(v)}^2$.
\smallskip

\noindent {\em Case 1.} $\displaystyle\frac{\<f,Lf\>_{C(y)}}{\norm{f}^2_{C(y)}} \ge \alpha(d)$.

We use the naive bound 
\begin{align*}
    \frac{\<g, Vg\>_{\ol{C}(y)}}{\norm{g}^2_{\ol{C}(y)}} \le 1 \hspace{5mm} \text{so} \hspace{5mm} \frac{\<g, Vg\>_{\ol{C}(y)}}{\norm{g}^2_{\ol{C}(y)}} \le \frac{1}{\alpha(d)} \frac{\<f,Lf\>_{C(y)}}{\norm{f}^2_{C(y)}}.
\end{align*}
Note that $\norm{g}^2_{\ol{C}(y)} = \gt^d \sum\limits_{S\subset[d]} \lpr{\frac{1}{3}}^{\abs{S}} \widehat{F}_y(S)^2$ and $\norm{f}^2_{C(y)} = 2^d \sum\limits_{S\subset[d]} \widehat{F}_y(S)^2$, so we immediately see $$ \frac{\norm{g}^2_{\ol{C}(y)}}{\norm{f}^2_{C(y)}} \le \frac{\gt^d}{2^d}.$$ Using this and rearranging gives that
\begin{equation}\label{eq:case 1 bound}
    \<g, Vg\>_{\ol{C}(y)} \le \frac{\norm{g}^2_{\ol{C}(y)}}{\norm{f}^2_{C(y)}} \frac{1}{\alpha(d)}\<f,Lf\>_{C(y)} \le \frac{\gt^d}{2^d} \frac{1}{\alpha(d)}\<f, Lf\>_{C(y)}.
\end{equation}

\noindent {\em Case 2.} $\displaystyle\frac{\<f,Lf\>_{C(y)}}{\norm{f}^2_{C(y)}} \le \alpha(d)$.

Assume $\norm{f}_{C(y)}^2 =1$. Let $\mu := \frac{\<f, \one\>}{2^d}$. By the Poincar\'e inequality,
\begin{align*}
    \norm{f-\mu\one}^2_{C(y)} \le \<f, Lf\> \le \alpha(d).
\end{align*}

Let $\norm{f-\mu\one}^2_{C(y), \infty}$ be the infinity norm of $f-\mu\one$ on $C(y)$. Then 
$$\norm{f-\mu\one}^2_{C(y), \infty} \le \norm{f - \mu\one}_{C(y)}^2 \le \alpha(d).$$
By its definition, $\mu = \sqrt{\E\lbr{f^2} - \Var(f)} \ge \sqrt{\frac{1}{2^d} - \frac{\alpha(d)}{2^d}}$. We see that for all $x \in C(y)$, 
$$\sqrt{\frac{1}{2^d}\lpr{1-\alpha(d)}} - \sqrt{\alpha(d)} \le f(x) \le \sqrt{\frac{1}{2^d}} + \sqrt{\alpha(d)}.$$

Now we relate the minimum and maximum values of $f$ on $C(y)$. Define $r$ by $$\sqrt{r} = \frac{\sqrt{1-\alpha(d)}-\sqrt{2^d\alpha(d)}}{1+\sqrt{2^d\alpha(d)}}.$$
Then we have $\min\limits_{k \in C(y)} f(k)^2 \ge r \max\limits_{k \in C(y)} f(k)^2$.

Combining the above,
\begin{equation}\label{eq:case 2 bound}
\<f, Vf\>_{C(y)} \ge \min\limits_{k \in C(y)} f(k)^2 \sum_{k \in C(y)} V(k) \ge \frac{2^d}{\gt^d} \frac{r}{a} \max\limits_{k \in C(y)} f(k)^2 \int_{\ol{C}(y)}V(x) \d x \ge \frac{2^d}{\gt^d} \frac{r}{a} \<g, Vg\>_{\ol{C}(y)}.
\end{equation}

From \eqref{eq:case 1 bound} and \eqref{eq:case 2 bound}, for any $y \in \gt\mathbb{Z}^d$
$$\<g, Vg\>_{\ol{C}(y)} \le \frac{\gt^d}{2^d}\frac{a}{r} \<f, Vf\>_{C(y)} + \frac{\gt^d}{2^d}\frac{1}{\alpha(d)} \<f, Lf\>_{C(y)}.$$

Now we sum over all $y \in \gt \Z^d$. Observe that in summing $\<f, Vf\>_{C(y)}$ over $y$, each vertex is counted $2^d$ times; similarly, in summing $\<f, Lf\>_{C(y)}$, each edge is counted $2^{d-1}$ times. Thus, 
\begin{align*}
\<g, Vg \> = \sum_{y\in\gt\Z^d} \<g, Vg\>_{\ol{C}(y)} &\le \frac{\gt^d}{2^d} \frac{a}{r}\sum_{y\in\gt\Z^d} \<f, Vf\>_{C(y)} + \frac{\gt^d}{2^d} \frac{1}{\alpha(d)} \sum_{y\in\gt\Z^d}\<f, Lf\>_{C(y)} \\
&= \frac{\gt^d}{2^d} \frac{a}{r} 2^d \<f, Vf\> + \frac{\gt^d}{2^d} \frac{1}{\alpha(d)} 2^{d-1} \<f, Lf\> \\
& = \gt^d \lpr{\frac{a}{r} \<f, Vf\> + \frac{1}{2\alpha(d)}\<f, Lf\>}.
\end{align*}
Letting $\alpha(d) = \frac{1}{10 \cdot 2^{d}}$ we have $r \ge \frac{1}{4}$ and we achieve the desired result.
\end{proof}

\smallskip
From \lemref{lem:norm comp}, \lemref{lem:lap comp}, and \lemref{lem:pot comp} we have 

\begin{align*}
    \frac{\< g, B_\theta g \>}{\norm{g}_2^2} &\le \frac{3^d}{2^d \theta^d \norm{f}^2} \lpr{\theta^d\lpr{1 + 10 \cdot 2^{d-1}} \<f, Lf \> + \frac{a}{r} \theta^d \< f, V f \> } \\
    &= \frac{1}{\norm{f}^2} \lpr{\lpr{\frac{3^d}{2^d} + 5 \cdot 3^d} \<f, Lf \> + \frac{3^d}{2^d} \frac{a}{r} \< f, V f \> }.
\end{align*}

This proves our claim with
$$c_d = \max \lcr{\frac{3^d}{2^d} + 5 \cdot 3^d, \frac{3^d}{2^d} \frac{a}{r}}. $$
\end{proof}

\bibliographystyle{alpha}
\bibliography{schrodinger}
\appendix

\section{Deferred proofs}\label{sec:sobolev proof}

\begin{proof}[Proof of {\propref{prop:muBlb intro}}] 
Assume $f:\R^d\rightarrow\R_{\ge 0}$ with $\norm{f}^2 = 1$ satisfies
$$ \theta^2 \norm{\nabla f}^2 + \int_{\R^d} V(x)f(x)^2\d x\le \theta^r,$$
for $r$ a constant depending on $p$ and $d$ to be chosen later.
Let
$$ W:=\{x:V(x)\le 2\theta^r\}$$
and notice by Markov's inequality (applied to the probability distribution defined by the mass of $f$) that
$$ \sum_C \norm{f|_C}^2 \frac{\norm{f|_{\oW\cap C}}^2}{\norm{f|_C}^2}=\norm{f|_{\oW}}^2 \le\half,$$
where the sum is over unit volume cubes in $\R^d$. 
Let $S$ be the set of cubes $C$ satisfying
$$ \frac{\norm{f|_{\overline{W}\cap C}}^2}{\norm{f|_C}^2}\le\frac{3}{4},$$
and notice by Markov's inequality that 
$$\sum_{C\in S}\norm{f|_C}^2\ge c$$
for a constant $c$. Let $q>2$. For each cube $C\in S$ we have $\norm{f|_{W\cap C}}^2\ge \frac{\norm{f|_C}^2}{4}$ as well as
$$\frac{\norm{f|_{W\cap C}}_q^2}{\norm{f|_{W\cap C}}^2}\ge \vol(W\cap C)^{\frac{2}{q}-1},$$
which together yield
$$ \norm{f}_q^2 \ge \sum_{C\in S} \norm{f|_{W\cap C}}_q^2 \ge \frac{1}{4}\sum_{C\in S} \vol(W\cap C)^{\frac{2}{q}-1}\norm{f|_C}^2.$$
Since $f$ has isolated critical points and is uniformly $p$-coercive, we have the uniform estimate
$$ \vol(W\cap C)=O\lpr{\theta^{\frac{rd}{p}}}$$
for each cube $C$. Substituting, we have
$$ \norm{f}_q^2 \ge \Omega(1)\cdot \theta^{\frac{rd}{p}\lpr{\frac{2}{q}-1}}\sum_{C\in S}\norm{f_C}^2 = \Omega(1)\cdot \theta^{\frac{rd}{p}\lpr{\frac{2}{q}-1}}.$$
\begin{enumerate}
\item[(b)] If $d\ge 3$ set $q=\frac{2d}{d-2}$ and apply the Sobolev inequality to obtain
$$ \norm{\nabla f}^2= \Omega(1)\cdot \theta^{-\frac{2r}{p}},$$
whence $\theta^2\norm{\nabla f}^2=\Omega\lpr{\theta^{2-\frac{2r}{p}}}.$ Choosing $r=2-\frac{2r}{p}=\frac{2p}{p+2}$ yields the claim.
\item[(a)] If $d=1,2$ we instead use the Sobolev inequalities
$$ \norm{\nabla f}^2 + \norm{f}^2\ge C_q\norm{f}_q^2,$$
for $q=\infty$ and $q$ large, respectively. Since $\norm{f}^2=1$ and the right hand side is unbounded as $\theta\rightarrow 0$, this term can be ignored and we have $\norm{f}_q^2\ge \Omega(1)\cdot \theta^{-\frac{r}{p}}$ for $d=1$ and $\norm{f}_q^2\ge \Omega(1)\cdot \theta^{-\frac{2r}{p}+o(1)}$ for $d=2$. Choosing $r=2-\frac{r}{p} $ and $r=2-\frac{2r}{p}+o(1)$ yields $r=\frac{2p}{p+1}$ and $r=\frac{2p}{p+2}+o(1)$, as advertised.
\end{enumerate}

Now, we set out to prove the upper bound (c) through a different approach. By boundedness, there is a constant $H$ such that $V(x)\le H\norm{x}^p=:W(x)$ where we assume that $V(0)=0$ and that 0 is a critical point of order $p$. For all test functions $g$, $\cR_{B_\gt}(g)\le\cR_{-\gt^2\Delta+W}(g)$. Let 
$$g_\gs(x)=\exp\lpr{-\frac{\norm{x}^2}{2\gs^2}}.$$ 
Then, 
$$\<g_\gs,Wg_\gs\>=H\int_{\R^d}\norm{x}^pg_\gs(x)\d x=HS_{d-1}\int_{\R_{\ge0}} r^{p+d-1}\exp\lpr{-\frac{r^2}{2\gs^2}}\d r=HS_{d-1}2^{\frac{p+d-2}{2}}\lpr{\frac{p+d-2}{2}}!\gs^{p+d}.$$
where $S_{d-1}=\frac{2\sqrt{\pi}^d}{\lpr{\frac{d-2}{2}}!}$ is the surface area of $\bS^{d-1}$. 
We also compute directly that
\begin{align*}
    \<g_\gs,\lpr{-\gt^2\Delta+H}g_\gs\>&=\half\lpr{d\frac{\gt^2}{\gs^2}+\tr H\gs^2}\\
    \norm{g_\gs}^2&=\lpr{\sqrt{\pi}\gs}^d
\end{align*}
so that all told, 
$$\<g_\gs,\lpr{-\gt^2\Delta+W}g_\gs\>=\half d\frac{\gt^2}{\gs^2}+2^{\frac{p+d}{2}}H\frac{\lpr{\frac{p+d-2}{2}}!}{\lpr{\frac{d-2}{2}}!}\gs^p.$$
By weighted AM--GM this is minimized by 
\begin{equation}
    \frac{2d}{p}\sqrt[\frac{p}{2}+1]{\frac{p}{d}2^{\frac{d-2}{2}}H\frac{\lpr{\frac{p+d-2}{2}}!}{\lpr{\frac{d-2}{2}}!}}\gt^{\frac{2p}{p+2}}\label{eq:muBub exact}
\end{equation}
where the exponent of $\gt$ matches the claim. 
\end{proof}

\begin{proof}[Proof of {\lemref{lem:shiftinv}}] 
Observe that $\norm{A_{\gt,0}-A_{\gt,\eta}}\le \sup\limits_{n\in\Z^d}\abs{V(n\theta+\eta)-V(n\theta)}\le \omega(\eta)$ where $\omega(\cdot)$ is the modulus of continuity of $V$, so $\mu(A_{\gt,\eta})$ is continuous in $\eta$. 

Let $\eps > 0$ and $\eta\in\R^d$. Since $\theta$ is irrational, ergodicity in each coordinate of $n\theta+\eta$ implies that there is some $\eta'\in\R^d$ and $n_0\in\Z^d$ such that $\abs{\eta'-\eta}\le\eps$ and $n_0\theta \equiv \eta'\pmod{1}$ (solve for an appropriate entry of $n_0$ separately for each component). Observe that $A_{\gt,\eta'}=A_{\gt,0}$ since the Laplacian part of $A_\gt$ is translation-invariant and we have $V(n_0\theta+n\theta)=V(n\theta+\eta')$. By continuity we now have $\abs{\mu(A_0)-\mu(A_\eta)}\le\eps$. Letting $\eps\to0$ yields the result.
\end{proof}

\end{document}